\newtheorem{theorem}{Theorem}[section]
\newtheorem{lemma}[theorem]{Lemma}
\newtheorem{notation}[theorem]{Notation}
\newtheorem{hypothesis}{Hypothesis}
\newtheorem{proposition}[theorem]{Proposition}
\newtheorem{definition}[theorem]{Definition}
\newtheorem{remark}[theorem]{Remark}
\newtheorem{corollary}[theorem]{Corollary}
\newtheorem{example}[theorem]{Example}
\numberwithin{equation}{section}
\renewcommand{\1}{\mathbbm{1}}
\newcommand{\N}{\mathbb{N}}
\newcommand{\Z}{\mathbb{Z}}
\newcommand{\E}{\mathbb{E}}
\newcommand{\p}{\mathbb{P}}
\newcommand{\R}{\mathbb{R}}
\newcommand{\convN}{\underset{N\rightarrow +\infty}{\longrightarrow}}
\renewcommand{\epsilon}{ \varepsilon}
\newcommand{\convloi}{\underset{N\rightarrow +\infty}{\overset{\text{law}}{\longrightarrow}}}
\newcommand{\Bsym}{\mathcal{B}^{\text{sym}}_0}
\newcommand{\bZ}{\overline{Z}}
\newcommand{\tZ}{\widetilde{Z}}
\newcommand{\eqlaw}{\overset{\text{law}}{=}}
\newcommand{\tK}{\widetilde{K}}
\newcommand{\tT}{\widetilde{T}}
\newcommand{\ktt}{\widetilde{K}_t}
\newcommand{\kt}{K_t}
\begin{document}

\title{Central limit theorem through expansion of the propagation of chaos
for Bird and Nanbu systems}

\author{Sylvain Rubenthaler\thanks{Univ. Nice Sophia Antipolis, CNRS, LJAD, UMR 7351, 06100 Nice, France.
E-mail: rubentha@unice.fr.}}
\maketitle
\begin{abstract}
The Bird and Nanbu systems are particle systems used to approximate
the solution of the mollified Boltzmann equation. These systems have
the propagation of chaos property. Following \cite{graham-meleard-1994,graham-meleard-1997,graham-meleard-1999},
we use coupling techniques  to write a kind of expansion of the error
in the propagation of chaos in terms of the number of particles.
This expansion enables us to prove the a.s convergence and the central-limit
theorem for these systems. Notably, we obtain a central-limit theorem
for the empirical measure of the system. As it is the case in \cite{graham-meleard-1994,graham-meleard-1997,graham-meleard-1999},
these results apply to the trajectories of particles on an interval
$[0,T]$.

Les systèmes de Bird et Nanbu sont des systèmes de particules en interaction
approchant la solution de l'équation de Boltzmann mollifiée. Ces systèmes
vérifient la propagation du chaos. Dans l'esprint de \cite{graham-meleard-1994,graham-meleard-1997,graham-meleard-1999},
nous utilisons des techniques de couplage pour écrire un développement
asymptotique dans la propagation du chaos, en terme du nombre de particules.
Ce développement nous permet de démontre la convergence p.s. de ces
systèmes, ainsi qu'un théorème central-limite. Ce théorème central-limite
s'applique à la mesure empirique du système. Comme dans \cite{graham-meleard-1994,graham-meleard-1997,graham-meleard-1999},
ces résultats s'appliquent aux trajectoires des particules sur un
intervalle $[0;T]$,

\textbf{Keywords}: interacting particle systems, Boltzmann equation,
nonlinear diffusion with jumps, random graphs and trees, coupling,
propagation of chaos, Monte Carlo algorithms, $U$-statistics, Gaussian
limit, Gaussian field.

\textbf{Mots-clés~: }système de particules en interaction, équation
de Boltzmann, diffusion non-linéaire avec sauts, graphes et arbres
aléatoires, coupalge, propagation du chaos, Monte-Carlo, $U$-statistiques,
limite gaussienne, champ gaussien. 

\textbf{MSC 2010}: 65M75, 82C82, 60C05, 60F17, 82C80. 
\end{abstract}

\section{Introduction}

In  (\cite{del-moral-patras-rubenthaler-2008}), we obtained an expansion
of the propagation of chaos for a Feynman-Kac particle system (which
means, in the context \cite{del-moral-patras-rubenthaler-2008}, that
the particles are interacting through a ``selection of the fittest''
process). This particle system approximates a particular Feynman-Kac
measure, in the sense that the empirical measure associated to the
system converges to the Feynman-Kac measure when the number of particles
$N$ goes to $\infty$. What is called propagation of chaos is the
following double property of the particle system:
\begin{itemize}
\item $q$ particles, amongst the total of $N$ particles, looked upon at
a fixed time, are asymptotically independent when $N\rightarrow+\infty$
($q$ is fixed) 
\item and their law is converging to the Feynman-Kac law. 
\end{itemize}
In \cite{del-moral-patras-rubenthaler-2008}, we wrote an expansion,
in powers of $N$, of the difference between the law of $q$ independent
particles, each of them of the Feynman-Kac law, and the law of $q$
particles coming from the particle system. This expansion can be called
a functional representation like in \cite{del-moral-patras-rubenthaler-2008};
in the present paper, we call it an expansion of the error in the
propagation of chaos. In the setting of \cite{del-moral-patras-rubenthaler-2008},
the time is discrete. In \cite{del-moral-patras-rubenthaler-2008},
we showed how to use this kind of expansion to derive a.s. convergence
results (p. 824). In \cite{del-moral-patras-rubenthaler-2009}, we
extend the result of \cite{del-moral-patras-rubenthaler-2008} to
the case where the time is continuous, still in the Feynman-Kac framework,
and we establish central-limit theorems for $U$-statistics of these
systems of particles. The proof of the central-limit theorems for
$U$-statistics relies only on the exploitation of the expansion mentioned
above.

In this paper, our aim is to establish a similar expansion for a family
of particles systems including Bird and Nanbu systems. We do not go
as far as obtaining an expansion in the terms of Theorem 1.6 and Corollary
1.8 of \cite{del-moral-patras-rubenthaler-2009}, but our expansion
is sufficient to prove central-limit theorems (Theorem \ref{Th:TCL-debut}
and Corollary \ref{cor:TCL}). Bird and Nanbu systems are used to
approximate the solution of the mollified Boltzmann equation. We refer
mainly to \cite{graham-meleard-1997} and take into account models
described in (2.5), (2.6) of \cite{graham-meleard-1997} (a similar
description can be found in \cite{graham-meleard-1999}, Section 3).
Another reference paper on the subject is \cite{graham-meleard-1994}.
Our paper is mainly interesting in the following: it provides a sequel
to the estimates on propagation of chaos of \cite{graham-meleard-1997},
\cite{graham-meleard-1999} and it allows to apply the techniques
of \cite{del-moral-patras-rubenthaler-2008}, \cite{del-moral-patras-rubenthaler-2009}
to Bird and Nanbu systems. In particular:
\begin{itemize}
\item In the present paper, we obtain a central-limit theorem for the empirical
measure of the system (Th. \ref{Th:TCL-debut}) under less assumptions
than in \cite{meleard-1998} Th. 4.2, 4.3. (we only make assumptions
that are sufficient to ensure a solution to the problem \ref{def:pb-02}
defined in Definition \ref{def:pb-02}). Note that the results of
\cite{meleard-1998} hold under the assumption that the operator $L$,
describing the ``free'' trajectories of the particles (see below),
has a certain form, and that its coefficients and their derivatives
up to a certain order are bounded (see in particular $(H_{0}'')$
p. 215 of \cite{meleard-1998}). These assumptions are stronger than
our and are more than what is required to have existence of a solution
to \ref{def:pb-02}. Note also that the result in \cite{meleard-1998}
is a functional CLT for the empirical process whereas our result is
a Gaussian fluctuation field result for the empirical measure, it
considers only a finite number of centered real test functions. A
result similar to \cite{meleard-1998} can be found in \cite{uchiyama-1983-b}
(with similar assumptions).
\item Our convergence results (Theorem \ref{Th:conv-ps-debut}, Theorem
\ref{Th:TCL-debut}, Corollary \ref{cor:TCL}) hold for particles
trajectories on any interval $[0,T]$. 
\end{itemize}
Here, the proofs are radically different from those in \cite{del-moral-patras-rubenthaler-2009}
and this is why we decided to write them in a different paper. In
\cite{del-moral-patras-rubenthaler-2009}, we deal with combinatorial
problems related to the particle system studied there whereas in the
present paper, we deal with coupling problems. 

In Section \ref{sec:Definition-and-main}, we will present  Bird and
Nanbu models, as they can be found in \cite{graham-meleard-1997}
and we will state our main results: Theorem \ref{Theo:boltzmann}
is a refinement of the propagation of chaos results for the above-cited
models, Theorem \ref{Th:conv-ps-debut} is an a.s. convergence result
for these systems and Theorem \ref{Th:TCL-debut} and Corollary \ref{cor:TCL}
are central-limit theorems for these systems. In Section  \ref{sec:Other-systems-of},
we will introduce various particle systems which will be useful in
the proofs and we will prove Th. \ref{Theo:boltzmann}. The proof
of Th. \ref{Theo:boltzmann} relies on estimates on population growth
found in \cite{athreya-ney-2004} and on coupling ideas. In Section
\ref{sec:Rate-of-convergence}, we will prove a convergence result
for a particular kind of centered functions (Proposition \ref{Prop:convq2}),
from which we will deduce Corollary \ref{Cor:wickproduct}. The kind
of result found in Corollary \ref{Cor:wickproduct} is called a Wick-type
formula in \cite{del-moral-patras-rubenthaler-2008} (see (3.6) p.
807 in \cite{del-moral-patras-rubenthaler-2008} and \cite{del-moral-patras-rubenthaler-2009},
p.15 and Proposition \ref{Prop:convq2}). Corollary \ref{Cor:wickproduct}
and Proposition \ref{Prop:convq2} are used in Section \ref{Sec:conv}
to prove Th. \ref{Th:conv-ps-debut} and Th. \ref{Th:TCL-debut} and
Cor. \ref{cor:TCL}. Similar results can be found in \cite{dawson-zheng-1991,shiga-tanaka-1985,uchiyama-1983-a,uchiyama-1988}.
We will compare them to our result after the statement of Th. \ref{Th:TCL-debut}. 

Note that CLT's of the same kind as our can be found in \cite{sznitman-1984,sznitman-1985}.
The equation approximated by particles systems in these papers are
quite different from our limit equation.

An important point is that here we want to discuss the mathematical
properties of a certain class of particle systems. We will not discuss
the physical models. Such a discussion can be found in \cite{graham-meleard-1999}.

\section{Definition and main results\label{sec:Definition-and-main}}

\subsection{A first particle model}

\label{Sec:definition}

 In the following, we deal with particles evolving in $\R^{d}$.
We set the mappings $e_{i}:h\in\R^{d}\mapsto e_{i}(h)=(0,\dots,0,h,0,\dots,0)\in\R^{d\times N}$
($h$ at the $i$-th rank) ($1\leq i\leq N$). We have a Markov generator
$L$ and a kernel $\widehat{\mu}(v,w,dh,dk)$ on $\R^{2d}$ which
is symmetrical (that is $\widehat{\mu}(v,w,dh,dk)=\widehat{\mu}(w,v,dk,dh)$).
We set $\mu(v,w,dh)$ to be the marginal $\widehat{\mu}(v,w,dh\times\R^{d})$
up to mass at zero. Our assumptions are the same as in \cite{graham-meleard-1997}:

\begin{hypothesis}\label{Hyp:01}
\begin{enumerate}
\item We suppose that the generator $L$ on $\R^{d}$ acts on a domain $\mathcal{D}(L)$
of $L^{\infty}(\R^{d})$. (See \cite{graham-meleard-1997} p. 119
for a discussion on $\mathcal{D}(L)$). \label{H1} 
\item We suppose $\sup_{x,a}\widehat{\mu}(x,a,\R^{d}\times\R^{d})\leq\Lambda<\infty$.
\label{H2} 
\end{enumerate}
\end{hypothesis}

In Nanbu and Bird systems, the kernel $\widehat{\mu}$ and the generator
$L$ have specific features coming from physical considerations. In
these systems, the coordinates in $\R^{d}$ represent the position
and speed of molecules. However, these considerations have no effect
on our proof. That is why we claim to have a proof for systems more
general than Bird and Nanbu systems.

%
%
%

The Nanbu and Bird systems are defined in (2.5) and (2.6) of \cite{graham-meleard-1997},
by the means of integrals over Poisson processes. Here, we give an
equivalent definition.

\begin{definition}\label{Def:system} The particle system described
in \cite{graham-meleard-1997} is denoted by 
\[
(\bZ_{t})_{t\geq0}=(\bZ_{t}^{i})_{t\geq0,1\leq i\leq N}\ .
\]
 It is a process of $N$ particles in $\R^{d}$ and can be summarized
by the following.
\begin{enumerate}
\item Particles $(\bZ_{0}^{i})_{1\leq i\leq N}$ in $\R^{d}$ are drawn
i.i.d. at time $0$ according to a law $\widetilde{P}_{0}$. 
 
\item Between jump times, the particles evolve independently of each other
according to $L$. 
\item \label{Def:point3}We have a collection $(N_{i,j})_{1\leq i<j\leq N}$
of independent Poisson processes of parameter $\Lambda/(N-1)$ (the
parameter $\Lambda$ coming from Hypothesis \ref{H1}). For $i>j$,
we set $N_{i,j}=N_{j,i}$. If $N_{i,j}$ has a jump at time $t$,
we say that there is an \underline{interaction} between particles
$i$ and $j$. If there is an interaction at time $t$, then the system
undergoes a \underline{jump} with probability $\frac{\widehat{\mu}(\bZ_{t-}^{i},\bZ_{t-}^{j},\R^{2d})}{\Lambda}$
:
\begin{equation}
\bZ_{t}=\bZ_{t-}+e_{i}(H)+e_{j}(K)\,,\,\mbox{with\,}\,(H,K)\sim\frac{{\widehat{\mu}}(\bZ_{t-}^{i},\bZ_{t-}^{j},.,.)}{\widehat{\mu}(\bZ_{t-}^{N,i},\bZ_{t-}^{N,j},\R^{2d})}\label{Eq:BirdJump}
\end{equation}
 (independently of all the other variables). \\
And with probability $1-\frac{\widehat{\mu}(\bZ_{t-}^{i},\bZ_{t-}^{j},\R^{2d})}{\Lambda}$
, there is \uline{no jump} at time $t$. We will use $(\bZ_{0:t}^{i})_{1\leq i\leq N}$
to denote the system of the trajectories of particles on $[0,t]$
($\forall t\geq0$), that is for all $i$: $\bZ_{0:t}^{i}=(\bZ_{s}^{i})_{0\leq s\leq t}$.
We will use this notation ``$0:t$'' again in the following for
the same purpose.
\end{enumerate}
\end{definition}

We denote the Skorohod space of processes in $\R^{d}$ by $\mathbb{D}(\R^{+},\R^{d})$
(or $\mathbb{D}([0;t],\R^{d})$, depending of the domain). As in \cite{graham-meleard-1994},
we define the total variation norm by the following: for all signed
measures $\nu$ on a measurable space $(S,\mathcal{S})$,
\[
\Vert\nu\Vert_{TV}=\sup\left\{ \int_{S}f(x)\nu(dx),\Vert f\Vert_{\infty}\leq1\right\} \,.
\]
 \begin{definition}\label{def:pb-02}Let $\xi$ be the canonical process
on the Skorohod space $\mathbb{D}(\R_{+},\R^{d})$. We say that $\widetilde{P}\in\mathcal{P}(\mathbb{D}(\R_{+},\R^{d}))$
is a solution to the martingale problem \ref{def:pb-02} with initial
condition $\widetilde{P}_{0}$ if, for all $\phi\in\mathcal{D}(L)$
and for all $t\geq0$,
\[
\phi(\xi_{t})-\phi(\xi_{0})-\int_{0}^{t}\int_{a,h\in\R^{d}}L\phi(\xi_{s})+(\phi(\xi_{s}+h)-\phi(\xi_{s}))\mu(\xi_{s},a,dh)\widetilde{P}_{s}(da)ds
\]
is a $\widetilde{P}$-martingale and the marginal of $\widetilde{P}$
at time $0$ is $\widetilde{P}_{0}$.

\end{definition}

In view of the above equation, the reason why the mass of $\mu(v,w,.)$
in zero (for any $v,w$) is not important is clear. According to Theorem
3.1 of \cite{graham-meleard-1997}, there exists a solution $\widetilde{P}$
of the problem \ref{def:pb-02} defined above (under hypothesis \ref{Hyp:01}).
We denote the marginal of $\widetilde{P}$ on $\mathbb{D}([0,T],\R^{d})$
by $\widetilde{P}_{0:T}$. We will work with this particular solution
in the following. This theorem also proves that (for all $q,t$):
\[
\Vert\mathcal{L}(\bZ_{0:t}^{1},\dots,\bZ_{0:t}^{q})-\mathcal{L}(\bZ_{0:t}^{1})^{\otimes q}\Vert_{TV}\leq2q(q-1)\frac{\Lambda t+\Lambda^{2}t^{2}}{N-1}\ ,
\]
 and 
\begin{equation}
\Vert\mathcal{L}(\bZ_{0:t}^{1})-\widetilde{P}_{0:t}\Vert_{TV}\leq6\frac{e^{\Lambda t}-1}{N+1}\ .\label{eq:approx-01}
\end{equation}

\begin{remark}If $\mu$ is fixed, there exists different $\widehat{\mu}$'s
having the proper marginal (that is, such that $\widehat{\mu}(.,.,.,\R^{d})=\mu(.,.,.)$).
In fact, it is the choice of $\widehat{\mu}$ that leads to having
different systems such as the Bird and Nanbu systems. We refer the
reader to \cite{graham-meleard-1994,graham-meleard-1999}, \cite{graham-meleard-1997}
p. 119-120 for very good discussions on the difference between the
Bird model and the Nanbu model. What matters here is that our result
applies to any system satisfying Hypothesis \ref{H1} and having jumps
of the form (\ref{Eq:BirdJump}).

\end{remark}

We can deduce propagation of chaos from the previous results, that
is for all $t$, for all $F$ bounded measurable, 
\[
\Vert\mathcal{L}(\bZ_{0:t}^{1},\dots,\bZ_{0:t}^{q})(F)-\widetilde{P}_{0:t}^{\otimes q}(F)\Vert_{TV\mbox{}}\leq\left(2q(q-1)\frac{\Lambda t+\Lambda^{2}t^{2}}{N-1}+6\frac{e^{\Lambda t}-1}{N+1}\right)\Vert F\Vert_{\infty}\ .
\]
 In Theorem \ref{Theo:boltzmann}, we will go further than the bound
in the equation above by writing an expansion of the left-hand side
term in powers of $N$ (see the discussion below Theorem \ref{Theo:boltzmann}
concerning the nature of this expansion). We will use techniques introduced
in \cite{graham-meleard-1997}. The main point is that one should
look at the processes backward in time.

\subsection{Statement of main results}

From now on, we will work with a fixed time horizon $T>0$ and a fixed
$q\in\N^{*}$.

\subsubsection{Expansion of the propagation of chaos}

We define for any $n,j\in\N^{*}$, $j\leq n$:
\[
\]
\[
[n]=\{1,2,\dots,n\}\,,\,\langle j,n\rangle=\{a:[j]\rightarrow[n],a\text{ injective }\}\ ,(n)_{j}=\#\langle j,n\rangle=\frac{n!}{(n-j)!}\ .
\]
We take $q\in\N^{*}$ and $T>0$. Let us set 
\[
\eta_{0:T}^{N}=\frac{1}{N}\sum_{1\leq i\leq N}\delta_{\bZ_{0:t}^{i}}\ ,\,(\eta_{0:T}^{N})^{\odot q}=\frac{1}{(N)_{q}}\sum_{a\in\langle q,N\rangle}\delta_{(\bZ_{0:T}^{a(1)},\dots,\bZ_{0:T}^{a(q)})}\ .
\]
 
\[
\]
 For any function $F:\mathbb{D}([0,T],\R^{d})^{q}\rightarrow\R$,
we call $(\eta_{0:T}^{N})^{\odot q}(F)$ a $U$-statistic. Note that
for all functions $F$, 
\begin{equation}
\E(F(\bZ_{0:T}^{1},\dots,\bZ_{0:T}^{q}))=\E((\eta_{0:T}^{N})^{\odot q}(F))\label{eq:defodot}
\end{equation}
because $(\bZ_{0:T}^{1},\dots,\bZ_{0:T}^{N})$ is exchangeable. We
define
\[
F_{\text{sym}}(x^{1},\dots,x^{q})=\frac{1}{q!}\sum_{\sigma\in\mathcal{S}_{q}}F(x^{\sigma(1)},\dots,x^{\sigma(q)})\ ,
\]
 where the sum is taken over the set $\mathcal{S}_{q}$ of the permutations
of $[q]$. We say that $F:\mathbb{D}([0,T],\R^{d})^{q}\rightarrow\R$
is symmetric if for all $\sigma$ in $\mathcal{S}_{q}$, $\forall x_{1},\dots,x_{q}\in\mathbb{D}([0,T],\R^{d})^{q}$,
$F(x_{\sigma(1)},\dots,x_{\sigma(q)})=F(x_{1},\dots,x_{q})$. If $F$
is symmetric then $F_{sym}=F$. Note that for all $F$, 

\[
(\eta_{0:T}^{N})^{\odot q}(F)=(\eta_{0:T}^{N})^{\odot q}(F_{\text{sym}})\,.
\]

\begin{theorem} \label{Theo:boltzmann} For all $q\geq1$, for any
bounded measurable symmetric $F$, for all $l_{0}\geq1$,

\begin{equation}
\E((\eta_{0:T}^{N})^{\odot q}(F))=\sum_{0\leq l\leq l_{0}}\left[\frac{1}{(N-1)^{l}}\Delta_{q,T}^{N,l}(F)\right]+\frac{1}{(N-1)^{l_{0}+1}}\overline{\Delta}_{q,T}^{N,l_{0}+1}(F)\label{eq:theo}
\end{equation}
 where the $\Delta_{q,T}^{N,l}$, $\overline{\Delta}_{q,T}^{N,l_{0}+1}$
are nonnegative measures uniformly bounded in $N$ (defined in Equations
(\ref{eq:def-delta}), (\ref{eq:def-delta-barre})). \end{theorem}

We will give a bound on these measures $\Delta$ and $\overline{\Delta}$
in (\ref{eq:borne-delta-delta-barre}). Let us define $\p_{T,q}^{N}(F)=\E((\eta_{T}^{N})^{\odot q}(F))$.
Regarding the fact that the theorem above is or is not a proper expansion,
what we can say is  that according to the terminology of \cite{del-moral-patras-rubenthaler-2008},
p. 782, we cannot say that the sequence of measure $(\p_{T,q}^{N})_{N\geq1}$
is differentiable up to any order because the $\Delta_{q,T}^{N,l}$
appearing in the development depend on $N$.

\subsubsection{Convergence results}

The main interest of Th. \ref{Theo:boltzmann} is that it gives us
sufficient knowledge of the particle system to prove an almost sure
convergence result and central-limit theorems. The key is to focus
on functions centered in the right way.

\begin{definition}\label{def:bsym}

We define a set of ``centered'' functions: 
\begin{eqnarray*}
 &  & \mathcal{B}_{0}^{sym}(q)=\Big\{ F:\mathbb{D}([0,T],\R^{qd})\rightarrow\R^{+},F\text{ measurable, symmetric, bounded},\\
 &  & ~~~~~~~~~~~~~~~~~~\int_{x_{1},\dots,x_{q}\in\mathbb{D}([0,T],E)}F(x_{1},\dots,x_{q})\widetilde{P}_{0:T}(dx_{q})=0\Big\}\ .
\end{eqnarray*}
 \end{definition}

We set (for $k$ even) 
\begin{equation}
J_{k}=\frac{k!}{2^{k/2}(k/2)!}\,.\label{eq:def-J-k}
\end{equation}
 (this is the number of partitions of $[k]$ into $k/2$ pairs).

\begin{proposition} \label{Prop:convq2} \label{Prop:conv-termes-centres}
(Proof in Subsection \ref{sub:Proof-of-Proposition-termes-centres})
For $q\geq1$, $F\in\mathcal{B}_{0}^{\text{sym}}(q)$, we have:
\begin{enumerate}
\item \label{point:prop-02}for $q$ odd, $N^{q/2}\E((\eta_{0:T}^{N})^{\odot q}(F))\underset{N\rightarrow+\infty}{\longrightarrow}0$, 
\item \label{point:prop-03}for $q$ even,
\begin{multline}
N^{q/2}\E((\eta_{0:T}^{N})^{\odot q}(F))\\
\convN\sum_{1\leq k\leq q/2}J_{q}\left(\begin{array}{c}
q/2\\
k
\end{array}\right)(-1)^{\frac{q}{2}-k}\\
\,\,\,\,\,\,\,\,\\
\times\E[\E_{\widetilde{\mathcal{K}}_{T}}(F(\widetilde{Z}_{0:T}^{1},\dots,\widetilde{Z}_{0:T}^{2k},\widetilde{\widetilde{Z}}_{0:T}^{2k+1},\dots,\widetilde{\widetilde{Z}}_{0:T}^{q})-F(\widetilde{\widetilde{Z}}_{0:T}^{1},\dots,\widetilde{\widetilde{Z}}_{0:T}^{q})|\widetilde{L}_{1,q})\\
\,\,\,\,\,\,\,\times\prod_{1\leq i\leq q/2}\int_{0}^{T}\Lambda\widetilde{K}_{s}^{2i-1}\widetilde{K}_{s}^{2i}ds]\label{eq:conWick01}
\end{multline}
 where the limit, indeed, does not depend on $N$ (the notations $\widetilde{\widetilde{Z}}$,
$\widetilde{\widetilde{Z}}$, $\widetilde{L}$, $\widetilde{K}$,
$\E_{\widetilde{\mathcal{K}}_{T}}$ will be introduced in Subsection
\ref{sub:Auxiliary-systems}). This limit takes a particular form
if $F=(f_{1}\otimes\dots\otimes f_{q})_{\mbox{sym }}$ (with $f_{i}\in\mathcal{B}_{0}^{sym}(1)$,
$\forall i$) (see Corollary \ref{Cor:wickproduct}).
\end{enumerate}
\end{proposition}

 Using the above Proposition, some combinatorics and Borel-Cantelli
Lemma, we prove the following theorem (see the proof in Subsection
\ref{sub:Almost-sure-convergence}).

\begin{theorem}\label{Th:conv-ps-debut}For any measurable bounded
$f$, $T\geq0$, 
\[
\eta_{0:T}^{N}(f)\underset{N\rightarrow+\infty}{\overset{\text{a.s.}}{\longrightarrow}}\widetilde{P}_{0:T}(f)\ .
\]

\end{theorem}

Using the above results and a computation on characteristic functions,
we then prove the following theorem (see the proof in Subsection \ref{sub:Central-limit-theorem}).

\begin{theorem}\label{Th:TCL-debut}For all $f_{1},\dots,f_{q}\in\mathcal{B}_{0}^{sym}(1)$,
for all $T\geq0$,
\[
N^{q/2}(\eta_{0:T}^{N}(f_{1}),\dots,\eta_{0:T}^{N}(f_{q}))\underset{N\rightarrow+\infty}{\overset{\text{law}}{\longrightarrow}}\mathcal{N}(0,K)\ ,
\]

(the matrix $K$ is given in (\ref{eq:def-K})).

\end{theorem}

A similar result can be found in \cite{uchiyama-1983-a}, under the
assumption that the initial law $\widetilde{P}_{0}$ falls in some
particular set. This assumption makes it difficult to compare our
covariance $K$ to the ones found in \cite{uchiyama-1983-a} (the
expressions of $K$ varies according to the subset $\widetilde{P}_{0}$
is in). A similar result can also be found in \cite{uchiyama-1988},
this time for particles moving in a set which can only be countable. 

The result in \cite{shiga-tanaka-1985} has common points with the
theorem above, but the kernel $Q^{(n)}$ defined in \cite{shiga-tanaka-1985}
is asymmetric. The variance appearing in Th. 2.1 of \cite{shiga-tanaka-1985}
(Equation (2.7)) could be expressed as an expectation over random
trees (if one uses Equation (2.22) of \cite{shiga-tanaka-1985}) but
the asymmetry of the kernel would make it different from our $K$
anyway. The fact that we do not need an assumption of the kind of
(2.4) p. 443 of \cite{shiga-tanaka-1985} is another difference between
our result and Th. 2.1 of \cite{shiga-tanaka-1985}. \cite{dawson-zheng-1991}
extends the result of \cite{shiga-tanaka-1985} to a case where the
jump rate is not bounded (without the second part of our Hypothesis
\ref{H1}) but it is limited to processes in $\mathbb{Z}_{+}$. %

Using classical techniques, we obtain the following Corollary.

\begin{corollary}\label{cor:TCL}For any $q\in\N^{*}$, $F$ bounded
measurable and symmetric, we have
\[
\sqrt{N}\left((\eta_{0:T}^{N})^{\odot q}(F)-\widetilde{P}_{0:T}(F)\right)\convloi\mathcal{N}(0,q^{2}(\widetilde{P}_{0:T}((F^{(1)})^{2})+V_{0:T}((F^{(1)})^{2}))\,,
\]
where $F^{(1)}(x_{1})=\int_{\mathbb{D}([0,T],\R^{d})^{q-1}}F(x_{1},\dots,x_{q})\widetilde{P}_{0:T}(dx_{2},\dots,dx_{q})$
and $V_{0:T}$ is defined in (\ref{eq:def-V-0-T}). 

\end{corollary}

\section{Other systems of particles\label{sec:Other-systems-of}}

In this section, we introduce the particle systems that we will need
for the proofs of the main results.

\subsection{Backward point of view\label{sub:Backward-point-of}}

 For $\lambda>0$, we denote by $\mathcal{E}(\lambda)$ the exponential
law of parameter $\lambda$. For any $x\in\R$, we define $\lfloor x\rfloor:=\sup\{i\in\Z,i\leq x\}$,
$\lceil x\rceil=\inf\{i\in\Z,i\geq x\}$.

We intend to construct a system of particles $(Z_{0:T}^{i})_{1\leq i\leq N}$
such that the first $q$ particles have the same law as $(\overline{Z}_{0:T}^{1},\dots,\overline{Z}_{0:T}^{q})$
(see Lemma \ref{Lem:equlaw1}). We use the fact that the processes
$(N_{i,j}(T-t))_{0\leq t\leq T}$ are Poisson processes to construct
the interaction graph for the first $q$ particles moving backward
in time. The system of particles $(Z_{0:T}^{i})_{1\leq i\leq q}$
is indeed the central system in our paper, hence all other systems
will be compared to it.

We start at $s=0$ with $C_{0}^{i}=\{i\}$, for all $i\in[q]$. For
$i\in[q]$, we want to define $(C_{s}^{i})_{s\geq0}$, $(K_{s}^{i})_{s\geq0}$
(respectively taking values in $\mathcal{P}(\N)$, $\N^{*}$). We
take $(U_{k})_{1\leq k},(V_{k})_{1\leq k}$ i.i.d. $\sim\mathcal{E}(1)$.
In all the following, we will use the conventions: $\inf\emptyset=+\infty$
and $(\dots)_{+}$ is the nonnegative part. The processes $(C^{i}),\,(K^{i})$
are piecewise constant and make jumps. At any time $t$, we set $K_{t}^{i}=\#C_{t}^{i}$.
For all $t\in[0,T]$, we set $K_{t}=\#\left(C_{t}^{1}\cup\dots\cup C_{t}^{q}\right)$.

Before considering the technical details, let us explain our purpose.
The population $C^{1}\cup\dots\cup C^{q}$ is allowed to get a new
particle from $[N]\backslash(C^{1}\cup\dots\cup C^{q})$ by growing
a link to this particle (this particle is chosen uniformly). If such
an event happens at time $t_{0}$, then the waiting time until the
next such event is of law $\mathcal{E}\left(\frac{\Lambda K_{t_{0}}(N-K_{t_{0}})_{+}}{N-1}\right)$.
To put it briefly, we say that this kind of event happens at a rate
$\frac{\Lambda K_{.}(N-K_{.})_{+}}{N-1}$. The population $C^{1}\cup\dots\cup C^{q}$
is allowed to form links between particles of $C^{1}\cup\dots\cup C^{q}$
(we will call ``loops'' these particular links), and this kind of
event happens at a rate $\frac{\Lambda K_{.}(K_{.}-1)}{2(N-1)}$ (the
newly linked particles are chosen uniformly). The processes $C^{1}$,
\ldots, $C^{q}$ are used in Definition \ref{Def:bZ} to define the
process $(Z^{i})_{1\leq i\leq N}$. As will be seen below, the link
times correspond to the interaction times of some particles.

We define the jump times recursively by $T_{0}=0$ and:

\begin{eqnarray*}
T'_{k} & = & \inf\left\{ T_{k-1}\leq s\leq T:(s-T_{k-1})\times\frac{\Lambda K_{T_{k-1}}(N-K_{T_{k-1}})_{+}}{N-1}\geq U_{k}\right\} \\
T''_{k} & = & \inf\left\{ T_{k-1}\leq s\leq T:(s-T_{k-1})\times\frac{\Lambda K_{T_{k-1}}(K_{T_{k-1}}-1)}{2(N-1)}\geq V_{k}\right\} \\
T_{k} & = & \inf(T'_{k},T''_{k})\ .
\end{eqnarray*}
Here, we use a representation with $\inf$'s to emphasize the fact
that these jump times are the jump times of Poisson processes with
certain intensities. At $T_{k}$:
\begin{itemize}
\item If $T_{k}=T'_{k}$, we draw 
\begin{equation}
r(k)\mbox{ uniformly in }C_{T_{k}-}^{1}\cup\dots\cup C_{T_{k}-}^{q}\,,\,j(k)\mbox{ uniformly in }[N]\backslash(C_{T_{k}-}^{1}\cup\dots\cup C_{T_{k}-}^{q})\,.\label{eq:tirage-r-j-01}
\end{equation}
 For any $i$ such that $r(k)\in C_{T_{k}-}^{i}$, we then perform
the jump: $C_{T_{k}}^{i}=C_{T_{k}-}^{i}\cup\{j(k)\}$.

Note that the $(\dots)_{+}$ in the definition of $T'_{k}$ above
prevents us from being in the situation where we would be looking
for $j(k)$ in $\emptyset$.

\item If $T_{k}=T_{k}''$, we draw 
\begin{equation}
r(k)\mbox{ uniformly in }C_{T_{k}-}^{1}\cup\dots\cup C_{T_{k}-}^{q}\,,\,j(k)\mbox{ uniformly in }C_{T_{k}-}^{1}\cup\dots\cup C_{T_{k}-}^{q}\backslash\{r(k)\}\,.\label{eq:tirage-r-j-02}
\end{equation}

\end{itemize}
For each $k$ such that $T_{k}\leq T$, if $l_{1},l_{2}$ are such
that $r(k)\in C_{T_{k}-}^{l_{1}}$, $j(k)\in C_{T_{k}-}^{l_{2}}$,
we say that there is a link between $C^{l_{1}}$ and $C^{l_{2}}$.
This whole construction is analogous to the construction of the interaction
graph found in \cite{graham-meleard-1997}, p. 122. For all $t\leq T$,
we set 
\[
\mathcal{K}_{t}=\left(K_{s}^{i}\right)_{1\leq i\leq q,0\leq s\leq t}\,.
\]

Let us now define an auxiliary process $({Z}_{s})_{0\leq s\leq T}=({Z}_{s}^{i})_{0\leq s\leq T,1\leq i\leq N}$
of $N$ particles in $\R^{d}$.

\begin{definition}\label{Def:bZ}Let $k'=\sup\left\{ k,T_{k}<\infty\right\} $.
The interaction times of $(Z_{s}^{i})_{1\leq s\leq T,1\leq i\leq N}$
are $ $$T-T_{k'}\leq T-T_{k'-1}\leq\dots\leq T-T_{1}$. (We say that
the interaction times are defined backward in time.)
\begin{itemize}
\item $Z_{0}^{1},\dots,Z_{0}^{N}$ are i.i.d. $\sim\widetilde{P}_{0}$ 
\item Between the times $(T-T_{k})_{k\geq1}$, the $Z^{i}$'s evolve independently
of each other according to the Markov generator $L$. 
\item At a  time $T-T_{k}$, $(Z^{i})_{1\leq i\leq N}$ undergoes an interaction
that has the same law as in Definition \ref{Def:system}, with $(i,j)$
replaced by $(r(k),j(k))$.
\end{itemize}
It is worth noting that for $N$ large and $i\notin[q]$, it is very
likely that the particle $i$ has no interaction with the other particles.

\end{definition}

For all $0\leq t\leq T$, we set 
\[
L_{t}=\#\{k\in\N:T_{k}\leq t,T_{k}=T''_{k}\}\ .
\]

\begin{example} Take $q=2$. Suppose for example, that $T_{0}=0$,
$T_{1}=T/2$, $T_{2}=3T/4$, $T_{3}=+\infty$, $r(1)=1$, $j(1)=2$,
$r(2)=2$, $j(2)=3$.

Then
\begin{itemize}
\item for $s\in[0,T/2[$, $K_{s}=2$, $L_{s}=0$, $K_{s}^{1}=K_{s}^{2}=1$, 
\item for $s\in[T/2,3T/4[$, $K_{s}=2$, $L_{s}=1$, $K_{s}^{1}=K_{s}^{2}=1$, 
\item for $s\in[3T/4,T]$, $K_{s}=3$, $L_{s}=1$, $K_{s}^{1}=1$, $K_{s}^{2}=2$
. 
\end{itemize}
\end{example}

\begin{figure}
\caption{Interaction graph for $(Z_{0:T}^{1},Z_{0:T}^{2})$}

\label{fig:01}

\[\fbox{
\xygraph{!{<0cm,0cm>;<1cm,0cm>:<0cm,1cm>::}!{(0,0)}*{}="00"!{(0,6)}*{}="06"!{(0,-1)}="0-1"!{(0,7)}="07"!{(2,0)}{1}="20"!{(2,6)}*{}="26"!{(2,3)}*{}="22"!{(4,0)}{2}="40"!{(4,6)}*{}="46"!{(4,3)}*{}="42"!{(4,4.5)}*{}="44"!{(6,0)}{3}="60"!{(6,6)}*{}="66"!{(6,4.5)}*{}="64"!{(8,0)}*{}="80"!{(8,6)}*{}="86"!{(8,-1)}="8-1"!{(8,7)}="87"!{(-0.5,0)}{T}!{(-0.5,6)}{0}!{(0.1,4.5)}*{}="g3"!{(-0.1,4.5)}="d3"!{(-0.7,3)}{T/2}!{(0.1,3)}*{}="g4"!{(-0.1,3)}="d4"!{(-0.5,4.5)}{T/4}!{(8.5,0)}{0}!{(8.5,6)}{T}!{(8.7,3)}{T/2}!{(8.1,4.5)}*{}="g1"!{(7.9,4.5)}="d1"!{(8.7,4.5)}{3T/4}!{(8.1,3)}*{}="g2"!{(7.9,3)}="d2""00"-"06""20"-"26""40"-"46""60"-@{.}"64""64"-"66""80"-"86""0-1"-@{<-}"07""8-1"-@{->}"87""06"-"86""44"-"64""22"-"42""42"-@{.}"g2""64"-@{.}"g1""00"-"20""20"-"40""40"-"60""60"-"80""g1"-"d1""g2"-"d2""g3"-"d3""g4"-"d4""d3"-@{.}"44""d4"-@{.}"22"} 
}\]
\end{figure}

Figure \ref{fig:01} is a pictorial representation of the example
above. The time arrow for the particle is on the left. The time arrow
for the processes $(C^{i})$, $(K^{i})$ is on the right. What we
draw here is called the graph of interactions (for $Z_{0:T}^{1}$,
$Z_{0:T}^{2}$) in \cite{graham-meleard-1997,graham-meleard-1999}.
Suppose we want to simulate $Z_{0:T}^{1}$, $Z_{0:T}^{2}$. We first
simulate the interaction times of the system. Suppose that these are
exactly $T-T_{1}$, $T-T_{2}$ with $T_{1}$, $T_{2}$ coming from
the example above. In Figure \ref{fig:01}, solid vertical lines represent
the trajectories we have to simulate to obtain $Z_{0:T}^{1}$, $Z_{0:T}^{2}$.
The particle numbers are to be found at the bottom of the graph. The
horizontal solid lines stand for the interaction we have to simulate
in order to obtain $Z_{0:T}^{1}$, $Z_{0:T}^{2}$ (they may or may
not induce jumps for the particles). For example, a horizontal solid
line between the vertical solid lines representing the trajectories
of $Z_{0:T}^{1}$, $Z_{0:T}^{2}$ stands for an interaction between
particle $1$ and particle $2$. The interactions are simulated following
Definition \ref{Def:bZ}. The trajectory $Z_{0:T}^{3}$ is represented
by a solid line between the times $0$ and $T/4$ and by a dashed
line between the times $T/4$ and $T$, with the number $3$ at the
bottom. As we want to simulate $Z_{0:T}^{1}$ and $Z_{0:T}^{2}$ and
we have simulated the jumps as in the example above, then we are not
interested in $Z_{t}^{3}$ for $t>T/4$ and we are not interested
in any $Z_{0:T}^{i}$ with $i\geq4$. Again, the time for the particles
should be read on the left.

The following lemma should be kept in mind throughout the whole paper.

\begin{lemma} \label{Lem:fondamental}Let us denote an inhomogeneous
Poisson process of rate $(\lambda_{t})_{t\geq0}$ by $(N_{t}^{\lambda})_{t\geq0}$
($\lambda$ is supposed to be piecewise constant).
\begin{enumerate}
\item \label{lem-point-01}Let us denote the jump times of $N^{\lambda}$
by $\tau_{1}<\tau_{2}<\dots$. Then for all $k\in\N^{*}$, 
\[
\mathcal{L}(\tau_{1},\tau_{2},\dots,\tau_{k}|\tau_{k}\leq T<\tau_{k+1})
\]
 is the law of the order statistics of $k$ independent variables
of law of density $t\mapsto\lambda_{t}/\int_{0}^{T}\lambda_{s}ds$
on $[0,T]$. 
\item \label{lem-point-02}For any $j\in\N^{*}$, take piecewise constant
processes $(\alpha_{t}^{j})_{t\geq0}$ such that for all $t$, $0\leq\alpha_{t}^{1}\leq\alpha_{t}^{1}+\alpha_{t}^{2}\leq\dots\leq\alpha_{t}^{1}+\dots+\alpha_{t}^{j}\leq1$.
Suppose we take $(W_{k})_{k\geq0}$ i.i.d. random variables of uniform
law on $[0,1]$ independent of $N^{\lambda}$. Suppose we set $j$
processes $(N_{t}^{i})_{t\geq0,1\leq i\leq j}$ such that $N_{0}^{i}=0$
for all $i$, the processes $N^{i}$'s are a.s. piecewise constant
and may jump at the jump times of $N^{\lambda}$ following this rule:
$\Delta N_{t}^{i}=1$ if and only if $\Delta N_{t}^{\lambda}=1$ and
$\alpha_{t}^{1}+\dots+\alpha_{t}^{i-1}\leq W_{N_{t-}^{\lambda}}<\alpha_{t}^{1}+\dots+\alpha_{t}^{i}$.
Then the $N^{i}$'s are $j$ independent inhomogeneous Poisson processes
such that $N^{i}$ has rate $(\alpha_{t}^{i}\times\lambda_{t})_{t\geq0}$
for all $i$. 
\item \label{lem-point-03}Take $j\in\N^{*}$. Take $j$ independent inhomogeneous
Poisson processes $(N^{i})_{1\leq i\leq j}$ respectively of rate
$(\lambda_{t}^{i})_{t\geq0}$ (the $\lambda^{i}$ are piecewise constant
processes). Then $N_{t}=N_{t}^{1}+\dots+N_{t}^{j}$ is an inhomogeneous
Poisson process of rate $(\lambda_{t}^{1}+\dots+\lambda_{t}^{j})_{t\geq0}$
and for all $i$, $s$, $\p(\Delta N_{t}^{i}=\Delta N_{t}|\Delta N_{t}=1)=\p(\Delta N_{t}^{i}=\Delta N_{t}|\Delta N_{t}=1,(N_{s}^{k})_{1\leq k\leq j,0\leq s<t})=\frac{\lambda_{t}^{i}}{\lambda_{t}^{1}+\dots+\lambda_{t}^{j}}$. 
\end{enumerate}
\end{lemma}

The point \ref{lem-point-01} of the above Lemma is derived from the
Mapping Theorem of \cite{kingman-1993} applied to Cox processes (see
\cite{kingman-1993}, p. 17 and 71). The point \ref{lem-point-02}
of the above Lemma is derived from the Coloring Theorem of \cite{kingman-1993}
(see p. 53). The point \ref{lem-point-03} of the above Lemma is derived
from the Superposition Theorem of \cite{kingman-1993} (see p.16).

We then obtain the following Lemma.

\begin{lemma} \label{Lem:equlaw1} For all $T\geq0$, $(Z_{0:T}^{1},\dots,Z_{0:T}^{q})\eqlaw(\bZ_{0:T}^{1},\dots,\bZ_{0:T}^{q})$.
\end{lemma}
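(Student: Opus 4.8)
The goal is to show that the $q$ tagged particles of the backward system $(Z^1_{0:T},\dots,Z^q_{0:T})$ have the same joint law on $\mathbb{D}([0,T],\R^d)^q$ as the first $q$ particles $(\bZ^1_{0:T},\dots,\bZ^q_{0:T})$ of the original Bird/Nanbu system. The natural strategy is to reconstruct, from the backward data $(T_k,r(k),j(k),C^i_\cdot)$ together with the i.i.d.\ initial conditions, the driving objects of Definition~\ref{Def:system}, and to check that they have the required joint law; since the dynamics (free evolution under $L$ between jumps, and the jump law \eqref{Eq:BirdJump}) are deterministic measurable functions of those driving objects, equality in law of the drivers forces equality in law of the trajectories.

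\textbf{Step 1: set up the right sigma-fields.} Work under the Poisson point process representation of Lemma~\ref{Lem:fondamental}. Condition on the initial positions and on the whole graph of interactions restricted to the tagged particles; the content is that, on the event that exactly $k'$ backward interaction times $T_1<\dots<T_{k'}$ occur and the cluster process $(C^1_\cdot,\dots,C^q_\cdot)$ takes a prescribed value, the remaining randomness — the choices $r(k),j(k)$, the acceptance uniforms $U$ of Definition~\ref{Def:system}(\ref{Def:point3}), and the jump increments $(H,K)$ — enters both systems in exactly the same way. So it suffices to identify the conditional law of the tagged interaction graph itself.

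\textbf{Step 2: identify the backward graph with the forward graph.} This is the heart of the argument and the step I expect to be the main obstacle, because it is where the combinatorics of ``which pairs interact and when'' must be matched across the time-reversal. In the forward system, conditionally on the tagged particles' trajectories being determined by a growing family of relevant indices, the relevant interactions are: (i) an interaction between a currently-relevant particle and a currently-irrelevant one, which enlarges the relevant set (rate $\Lambda K(N-K)/(N-1)$ since there are $K$ relevant indices, $N-K$ irrelevant ones, and each ordered pair fires at rate $\Lambda/(N-1)$), and (ii) an interaction between two relevant particles, a ``loop'' (rate $\Lambda\binom{K}{2}/(N-1) = \Lambda K(K-1)/(2(N-1))$). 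Reading the processes $N_{i,j}(T-\cdot)$ backward in time turns these into Poisson processes with the same rates, so the superposition/thinning statements of Lemma~\ref{Lem:fondamental} (points \ref{lem-point-02}, \ref{lem-point-03}) show that the jump times $T'_k$ (enlargement) and $T''_k$ (loop) defined via the exponential clocks $U_k,V_k$ have precisely the law of the backward relevant-interaction times, with $r(k)$ uniform among the relevant indices and $j(k)$ uniform among the complementary set \eqref{eq:tirage-r-j-01} (resp.\ among the relevant ones \eqref{eq:tirage-r-j-02}). Care must be taken with the $(\cdot)_+$ truncation, which only matters on the negligible event $K=N$ and is precisely what keeps $j(k)$ from being sought in $\emptyset$; and with the memorylessness of the exponential, which lets the clocks be reset at each $T_{k-1}$ rather than carried globally. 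One checks by induction on $k$ that the conditional law of $(T_k,r(k),j(k))$ given the past is the same function of the past as in the forward description restricted to tagged particles.

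\textbf{Step 3: conclude.} Once the tagged interaction graphs agree in law, feed in the common initial law $\widetilde P_0^{\otimes}$, the common between-jump dynamics $L$, and the common jump kernel \eqref{Eq:BirdJump} (Bird) or its Nanbu variant; each of $Z^i_{0:T}$ and $\bZ^i_{0:T}$ for $i\in[q]$ is the same measurable functional of (initial data, tagged graph, acceptance uniforms, jump increments). A monotone-class / measurable-selection argument over test functions $F$ on $\mathbb{D}([0,T],\R^d)^q$ then gives $(Z^1_{0:T},\dots,Z^q_{0:T})\eqlaw(\bZ^1_{0:T},\dots,\bZ^q_{0:T})$, which is the claim. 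The only subtlety beyond bookkeeping is making sure that ``the trajectories of the tagged particles in the forward system depend only on the tagged graph'' — i.e.\ that interactions of irrelevant particles among themselves never feed back into $\bZ^1,\dots,\bZ^q$ on $[0,T]$ — which is exactly the pruning illustrated in Figures~\ref{fig:00}--\ref{fig:01} and is justified by following each index backward from time $T$.
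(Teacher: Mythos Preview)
Your proposal is correct and follows essentially the same route as the paper: time-reverse the Poisson clocks $N_{i,j}(T-\cdot)$, use the superposition/thinning statements of Lemma~\ref{Lem:fondamental} (\ref{lem-point-02}),(\ref{lem-point-03}) to identify the backward enlargement rate $\Lambda K(N-K)_{+}/(N-1)$ and loop rate $\Lambda K(K-1)/(2(N-1))$ with the corresponding waiting times $T'_k,T''_k$, and then observe that the tagged trajectories are the same measurable functional of the graph, initial data, and jump innovations. The paper's proof is a terse sketch of exactly this; your Step~3 and the remark on why irrelevant-irrelevant interactions never feed back are a bit more explicit than what the paper writes, but nothing is genuinely different.
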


The proof can be found in Section \ref{sub:Proof-of-Lemma}.

\subsection{Auxiliary systems\label{sub:Auxiliary-systems}}

We now define an auxiliary system $(\widetilde{Z}_{0:T}^{i})_{i\geq1}$
with an infinite number of particles. We start at $s=0$ with $\widetilde{C}_{0}^{i}=\{i\}$,
for all $i\in[q]$. For $1\leq i\leq N$, we define $(\widetilde{C}_{s}^{i})_{s\geq0,1\leq i\leq q}$,
$(\widetilde{K}_{s}^{i})_{s\geq0,1\leq i\leq q}$ (respectively taking
values in $\mathcal{P}(\N),\N$) by the following. The processes $(\widetilde{C}^{i}),(\widetilde{K}^{i})$
are piecewise constant. At any time $t$, $\widetilde{K}_{t}^{i}=\#\widetilde{C}_{t}^{i}$. 

Before going into the technical details, let us explain the purpose
of the construction. We intend to build a process $\widetilde{C}=\widetilde{C}^{1}\cup\dots\cup\widetilde{C}^{q}$
that grows at a rate $\Lambda\widetilde{K}_{.}$ (by creating links
to new particles) and that forms links between two particles of $\widetilde{C}^{1}\cup\dots\cup\widetilde{C}^{q}$
at a rate $\frac{\Lambda\widetilde{K}_{.}(\widetilde{K}_{.}-1)}{2(N-1)}$
(in this case, the links will also be called loops). Note that for
$k\in\N^{*}$, $\frac{\Lambda k(N-k)_{+}}{N-1}\underset{N\rightarrow+\infty}{\sim}\Lambda k$,
and that, for all $N$, $\Lambda k\geq\frac{\Lambda k(N-k)_{+}}{N-1}$.
Given the processes $C^{1}$, ..., $C^{q}$, we add to them jumps
(and elements) so as to form populations $\widetilde{C}^{1}$, ...,
$\widetilde{C}^{q}$ with the desired growth rate. It will be easier
to write inequalities if the populations $C^{i}$ and $\widetilde{C}^{i}$
are coupled (see for example the proof of Theorem \ref{Theo:boltzmann}).
That is why we use the jump times $T_{k}$, $T'_{k}$, $T''_{k}$
in the construction below. 

We take $(\widetilde{U}_{k})_{k\geq1}$, $(\widetilde{U}_{k}')_{k\geq1}$
i.i.d. $\sim\mathcal{E}(1)$. We define the jump times recursively
by $\widetilde{T}_{0}=0$ and

\begin{eqnarray*}
\widetilde{T}'_{k} & = & \inf\left\{ \widetilde{T}_{k-1}\leq s\leq T,\,(s-\widetilde{T}_{k-1})\left(\Lambda\tK_{\widetilde{T}_{k-1}}-\frac{\Lambda K_{\widetilde{T}_{k-1}}(N-K_{\widetilde{T}_{k-1}})_{+}}{N-1}\right)\geq\widetilde{U}_{k}\right\} \\
\widetilde{T}_{k}'' & = & \inf\left\{ \widetilde{T}_{k-1}\leq s\leq T,\,(s-\widetilde{T}_{k-1})\times\frac{\Lambda\widetilde{K}_{\widetilde{T}_{k-1}}(\widetilde{K}_{\widetilde{T}_{k-1}}-1)-\Lambda K_{\widetilde{T}_{k-1}}(K_{\widetilde{T}_{k-1}}-1)}{2(N-1)}\geq\widetilde{U}_{k}'\right\} \\
\widetilde{T}_{k} & = & \inf(\widetilde{T}'_{k},\widetilde{T}_{k}'',\inf\{T_{l}:T_{l}>\widetilde{T}_{k-1}\})
\end{eqnarray*}
(recall the definition of the process $(K_{t})$ and the $T_{k}$'s
from Subsection \ref{sub:Backward-point-of}). Note that $\{T_{k},k\geq0\}\subset\{\tT_{k},k\geq0\}$.
At $\widetilde{T}_{k}$:
\begin{itemize}
\item If $\widetilde{T}_{k}=\widetilde{T}'_{k}$ (note that it implies that
$\widetilde{K}_{\widetilde{T}_{k}-}-\frac{K_{\widetilde{T}_{k}-}(N-K_{\widetilde{T}_{k}-})_{+}}{N-1}>0$):

\begin{itemize}
\item With probability 
\begin{equation}
\frac{K_{\widetilde{T}_{k}-}-\frac{K_{\widetilde{T}_{k}-}(N-K_{\widetilde{T}_{k}-})_{+}}{N-1}}{\widetilde{K}_{\widetilde{T}_{k}-}-\frac{K_{\widetilde{T}_{k}-}(N-K_{\widetilde{T}_{k}-})_{+}}{N-1}}\,,\label{eq:proba-choix-01}
\end{equation}
 we draw $\widetilde{r}(k)$ uniformly in $C_{\widetilde{T}_{k}-}^{1}\cup\dots\cup C_{\widetilde{T}_{k}-}^{q}$
and 
\[
\widetilde{j}(k)=\min\left\{ \N^{*}\backslash(\widetilde{C}_{\widetilde{T}_{k}-}^{1}\cup\dots\cup\widetilde{C}_{\widetilde{T}_{k}-}^{q}\cup[N])\right\} \,.
\]
 The following jump is performed: for all $l$ such that $\widetilde{r}(k)\in\widetilde{C}_{\widetilde{T}_{k}-}^{l}$,
$\widetilde{C}_{\widetilde{T}_{k}}^{l}=\widetilde{C}_{\widetilde{T}_{k}-}^{l}\cup\left\{ \widetilde{j}(k)\right\} $. 
\item With probability 
\begin{equation}
\frac{\widetilde{K}_{\widetilde{T}_{k}}-K_{\widetilde{T}_{k}-}}{\widetilde{K}_{\widetilde{T}_{k}-}-\frac{K_{\widetilde{T}_{k}-}(N-K_{\widetilde{T}_{k}-})_{+}}{N-1}}\,,\label{eq:proba-choix-02}
\end{equation}
we draw $\widetilde{r}(k)$ uniformly in $(\widetilde{C}_{\widetilde{T}_{k}-}^{1}\backslash C_{\widetilde{T}_{k}-}^{1})\cup\dots\cup(\widetilde{C}_{\widetilde{T}_{k}-}^{q}\backslash C_{\widetilde{T}_{k}-}^{q})$
and\\
 $\widetilde{j}(k)=\min\left\{ \N^{*}\backslash(\widetilde{C}_{\widetilde{T}_{k}-}^{1}\cup\dots\cup\widetilde{C}_{\widetilde{T}_{k}-}^{q}\cup[N])\right\} $.
The following jump is performed: for all $l$ such that $\widetilde{r}(k)\in\widetilde{C}_{\widetilde{T}_{k}-}^{l}$,
$\widetilde{C}_{\widetilde{T}_{k}}^{l}=\widetilde{C}_{\widetilde{T}_{k}-}^{l}\cup\left\{ \widetilde{j}(k)\right\} $. 
\end{itemize}
\item If $\widetilde{T}_{k}=\widetilde{T}''_{k}$ (note that it implies
$\widetilde{K}_{\widetilde{T}_{k}-}>K_{\widetilde{T}_{k}-}$):

\begin{itemize}
\item With probability
\begin{equation}
\frac{(\widetilde{K}_{\widetilde{T}_{k}-}-K_{\widetilde{T}_{k}-})K_{\widetilde{T}_{k}-}+(\widetilde{K}_{\widetilde{T}_{k}-}-K_{\widetilde{T}_{k}-})(\widetilde{K}_{\widetilde{T}_{k}-}-K_{\widetilde{T}_{k}-}-1)}{\widetilde{K}_{\widetilde{T}_{k}-}(\widetilde{K}_{\widetilde{T}_{k}-}-1)-K_{\widetilde{T}_{k}-}(K_{\widetilde{T}_{k}-}-1)}\,,\label{eq:proba-saut-en-plus}
\end{equation}
 we draw $\widetilde{r}(k)$ uniformly in $(\widetilde{C}_{\widetilde{T}_{k}-}^{1}\backslash C_{\widetilde{T}_{k}-}^{1})\cup\dots\cup(\widetilde{C}_{\widetilde{T}_{k}-}^{q}\backslash C_{\widetilde{T}_{k}-}^{q})$
and $\widetilde{j}(k)$ uniformly in $(\widetilde{C}_{\widetilde{T}_{k}-}^{1}\cup\dots\cup\widetilde{C}_{\widetilde{T}_{k}-}^{q})\backslash\left\{ \widetilde{r}(k)\right\} $. 
\item With probability 
\begin{equation}
\frac{(\widetilde{K}_{\widetilde{T}_{k}-}-K_{\widetilde{T}_{k}-})K_{\widetilde{T}_{k}-}}{\widetilde{K}_{\widetilde{T}_{k}-}(\widetilde{K}_{\widetilde{T}_{k}-}-1)-K_{\widetilde{T}_{k}-}(K_{\widetilde{T}_{k}-}-1)}\,,\label{eq:proba-saut-en-plus-02}
\end{equation}
 we draw $\widetilde{r}(k)$ uniformly in $(\widetilde{C}_{\widetilde{T}_{k}-}^{1}\backslash C_{\widetilde{T}_{k}-}^{1})\cup\dots\cup(\widetilde{C}_{\widetilde{T}_{k}-}^{q}\backslash C_{\widetilde{T}_{k}-}^{q})$
and $\widetilde{j}(k)$ uniformly in $C_{\widetilde{T}_{k}-}^{1}\cup\dots\cup C_{\widetilde{T}_{k}-}^{q}$. 
\end{itemize}
\item If $\widetilde{T}_{k}=T'_{l}$ for some $l$, we take $\widetilde{r}(k)=r(l),\widetilde{j}(k)=j(l)$
as in  (\ref{eq:tirage-r-j-01}). The following jump is performed:
$\widetilde{C}_{\tilde{T}_{k}}^{l}=\widetilde{C}_{\tilde{T}_{k}-}^{l}\cup\{\widetilde{j}(k)\}$
. 
\item If $\widetilde{T}_{k}=T''_{l}$ for some $l$, we take $\widetilde{r}(k)=r(l),\,\widetilde{j}(k)=j(l)$
as in (\ref{eq:tirage-r-j-02}). 
\end{itemize}
We define 
\begin{equation}
\widetilde{L}_{t}=\#\left\{ k:\widetilde{T}_{k}\in\left\{ T''_{l},\widetilde{T}_{l}'',l\geq1\right\} ,\widetilde{T}_{k}\leq T\right\} \,.\label{eq:def-L-T}
\end{equation}
 We set for all $s,t\leq T,\,i\in[q]$, 
\[
\widetilde{K}_{s}=\#(\widetilde{C}_{s}^{1}+\dots+\widetilde{C}_{s}^{q})\ ,
\]
 
\[
\widetilde{\mathcal{K}}_{t}=(\widetilde{K}_{s}^{j})_{1\leq j\leq q,0\leq s\leq t\,.}
\]
 \begin{notation}

We set $\E_{\mathcal{K}_{t}}(\dots)=\E(\dots|\mathcal{K}_{t})$ ,
$\p_{\mathcal{K}_{t}}(\dots)=\p(\dots|\mathcal{K}_{t})$, $\E_{\widetilde{\mathcal{K}}_{t}}(\dots)=\E(\dots|\widetilde{\mathcal{K}}_{t})$,
$\p_{\widetilde{\mathcal{K}}_{t}}(\dots)=\p(\dots|\widetilde{\mathcal{K}}_{t})$,
$\E_{\mathcal{K}_{t},\widetilde{\mathcal{K}}_{t}}(\dots)=\E(\dots|\mathcal{K}_{t},\widetilde{\mathcal{K}}_{t})$. 

\end{notation}

For all $k$, $l_{1},l_{2}$ such that $\widetilde{r}(k)\in\widetilde{C}_{\widetilde{T}_{k}-}^{l_{1}}$,
$\widetilde{j}(k)\in\widetilde{C}_{\widetilde{T}_{k}-}^{l_{2}}$,
we say that there is a link between $\widetilde{C}^{l_{1}}$ and $\widetilde{C}^{l_{2}}$.
We define \label{We-define}for all $I$ subset of $[q]$, 
\begin{multline}
\mathcal{T}^{I}=\left\{ \widetilde{T}_{k}\leq T,k\geq0,\,\widetilde{T}_{k}\in\left\{ T'_{l},\widetilde{T}'_{l},l\geq1\right\} ,\widetilde{r}(k)\mbox{ or }\widetilde{j}(k)\in\cup_{i\in I}\widetilde{C}_{\widetilde{T}_{k}-}^{i}\right\} \\
\cup\left\{ \widetilde{T}_{k}\leq T,k\geq0,\,\widetilde{T}_{k}\in\left\{ T''_{l},\widetilde{T}''_{l},l\geq1\right\} ,\widetilde{r}(k)\mbox{ and }\widetilde{j}(k)\in\cup_{i\in I}\widetilde{C}_{\widetilde{T}_{k}-}^{i}\right\} \,.\label{eq:def-Tronde}
\end{multline}
 In other words, $\mathcal{T}^{I}$ is the set of jump times $\widetilde{T}_{k}$
such that 
\begin{itemize}
\item if $\Delta\widetilde{L}_{\widetilde{T}_{k}}=0$, $\exists l\in I$
such that $\widetilde{C}^{l}$ jumps in $\widetilde{T}_{k}$,
\item if $\Delta\widetilde{L}_{\widetilde{T}_{k}}\neq0$, $\exists l_{1},l_{2}\in I$
such that $\widetilde{r}(k)\in\widetilde{C}_{\widetilde{T}_{k}-}^{l_{1}}$,
$\widetilde{j}(k)\in\widetilde{C}_{\widetilde{T}_{k}-}^{l_{2}}$.
\end{itemize}
We define for all $t,j$,

\begin{equation}
\widetilde{L}_{t}^{I}=\#\left\{ s\in\mathcal{T}^{I}\,,\,s\leq t\,,\,s\in\left\{ T''_{l},\widetilde{T}''_{l},l\geq1\right\} \right\} \,.\label{eq:def-L-tilde-(j-1,j)}
\end{equation}
We have $\widetilde{L}_{t}=\widetilde{L}_{t}^{[q]}$. The following
lemma is a consequence of Lemma \ref{Lem:fondamental}. Its proof
is elementary but quite long. It is written in Section \ref{sec:Proof-of-Lemma}.

\begin{lemma} \label{Lem:law-K-tilde}
\begin{enumerate}
\item \label{loiKtilde:02}The process $(\widetilde{K}_{s})_{0\leq s\leq T}$
is piecewise constant, has jumps of size $1$ and satisfies, for all
$s,t$ such that $0\leq s\leq t$, 
\[
\p(\widetilde{K}_{t}=\widetilde{K}_{s}|\tK_{s})=\exp(-\Lambda(t-s)\widetilde{K}_{s})\ .
\]
 The process $(\widetilde{L}_{s})_{0\leq s\leq T}$ is piecewise constant,
has jumps of size $1$ and satisfies for all $s,t$ such that $0\leq s\leq t$
\[
\p(\widetilde{L}_{t}=\widetilde{L}_{s}|\widetilde{L}_{s},(\widetilde{K}_{u})_{0\leq u\leq t})=\exp\left(-\int_{s}^{t}\frac{\Lambda\widetilde{K}_{u}(\widetilde{K}_{u}-1)}{N-1}du\right)\,.
\]
 Hence, knowing $(\widetilde{K}_{u})_{0\leq u\leq T}$, $(\widetilde{L}_{t})_{t\geq0}$
is an inhomogeneous Poisson process of rate $(\Lambda\widetilde{K}_{u}(\widetilde{K}_{u}-1)/(N-1))_{0\leq u\leq T}$. 
\item \label{lemKtilde:03}For all $t$, $\widetilde{K}_{t}\geq K_{t}$
and $\widetilde{L}_{t}\geq L_{t}$ a.s. . 
\item \label{lemKtilde:04}If $T_{1}=\tT_{1}$, \dots, $T_{k}=\tT_{k}$
then $\tK_{T_{k}}=K_{T_{k}}$, $\widetilde{L}_{T_{k}}=L_{T_{k}}$. 
\item \label{lemKtilde:01}The processes $(\widetilde{K}_{t}^{i})_{0\leq s\leq T}$
are independent. They are piecewise constant, have jumps of size $1$
and satisfy for all $s,t$ such that $0\leq s\leq t$, for all $i\in[q]$,
\[
\p(\widetilde{K}_{t}^{i}=\widetilde{K}_{s}^{i}|\widetilde{K}_{s}^{i})=\exp(-\Lambda(t-s)\widetilde{K}_{s}^{i})\ .
\]
 These processes are thus $q$ independent Yule processes (see \cite{athreya-ney-2004},
p. 102-109, p. 109 for the law of the Yule process). We have for all
$k\geq q$, 
\begin{equation}
\p(\widetilde{K}_{T}=k)=\left(\begin{array}{c}
k\\
q-1
\end{array}\right)(e^{-\Lambda T})^{q}(1-e^{-\Lambda T})^{k-q}\,.\label{eq:loi-de-tilde-K-T}
\end{equation}

\item \label{lemKtilde:05}Conditionally to $\widetilde{\mathcal{K}}_{T}$,
the processes $\left((\widetilde{L}_{t}^{\{2i-1,2i\}})_{0\leq t\leq T}\right)_{i\in\{1,\dots,q/2\}}$
are independent non homogeneous Poisson processes of rates, respectively,
\[
\left(\frac{\widetilde{K}_{t}^{2i-1}\widetilde{K}_{t}^{2i}}{N-1}\right)_{0\leq t\leq T}\,.
\]
 
\end{enumerate}
\end{lemma} 

Let us carry on with the definition of $(\widetilde{Z})$.

\begin{definition} \label{def:z-tilde}Let $\widetilde{k}'=\sup\left\{ k,\widetilde{T}_{k}<\infty\right\} $.
The interaction times of the $\tZ^{i}$ are $T-\widetilde{T}_{\widetilde{k}'}\leq T-\widetilde{T}_{\widetilde{k}'-1}\leq\dots\leq T-\widetilde{T}_{1}$.
\begin{itemize}
\item The $(\widetilde{Z}_{0}^{i})$ are i.i.d. $\sim\widetilde{P}_{0}$. 
\item Between the jump times, the $\widetilde{Z}^{i}$ evolve independently
of each other according to the Markov generator $L$. 
\item At a jump time $T-\widetilde{T}_{k}$, $(\tZ)$ undergoes a jump like
in Definition \ref{Def:system}, (\ref{Def:point3}), with $i,j$
replaced by $\widetilde{r}(k),\widetilde{j}(k)$. %
{} 
\end{itemize}
In so doing, we have coupled the interaction times of the systems
$(Z_{0:T}^{i})_{i\geq0}$, $(\widetilde{Z}_{0:T}^{i})_{i\geq0}$.
We can couple further and assume that for all $i$, $\widetilde{Z}_{0:T}^{i}$
and $Z_{0:T}^{i}$ coincide on the event $\{\widetilde{T}{}_{k},k\geq1\}\cap\left\{ \widetilde{T}'_{k},k\geq1\right\} =\emptyset$
(in which case, $\{T_{k},k\geq0\}=\{\widetilde{T}_{k},k\geq0\}$).

\end{definition}

\begin{definition}\label{def:z-tilde-tilde}We define the auxiliary
system $(\widetilde{\widetilde{Z}}_{0:T})_{i\geq0}$ such that
\begin{itemize}
\item it has interactions at times $\{T-\widetilde{T}_{k},k\geq1\}\setminus\{T-T''_{k},T-\widetilde{T}''_{k},k\geq1\}$ 
\item the rest of the definition is the same as for $(\widetilde{Z}_{0:T}^{i})_{i\geq0}$. 
\end{itemize}
In so doing, we have coupled the interaction times of the systems
$(\widetilde{Z}_{0:T}^{i})_{i\geq0}$, $(\widetilde{\widetilde{Z}}_{0:T}^{i})_{i\geq0}$.
We can couple further and assume that for all $i$, $\widetilde{Z}_{0:T}^{i}$
and $\widetilde{\widetilde{Z}}_{0:T}^{i}$ coincide on the event $\{\widetilde{T}{}_{k},k\geq1\}\cap\left\{ T''_{k},\widetilde{T}''_{k},k\geq1\right\} =\emptyset$.

\end{definition}

It is worth noting that the laws of $(Z_{0:T}^{1},\dots,Z_{0:T}^{q})$
and $(\widetilde{Z}_{0:T}^{1},\dots,\widetilde{Z}_{0:T}^{q})$ and
$(\widetilde{\widetilde{Z}}_{0:T}^{1},\dots,\widetilde{\widetilde{Z}}_{0:T}^{q})$
are exchangeable.

The $q$-uple $(\widetilde{Z}_{0:T}^{i})_{1\leq i\leq q}$ is obtained
from $(Z_{0:T}^{i})_{1\leq i\leq q}$ by adding links. The $q$-uple
$(\widetilde{\widetilde{Z}}_{0:T}^{i})_{1\leq i\leq q}$ is obtained
from $(\widetilde{Z}_{0:T}^{i})_{1\leq i\leq q}$ by erasing the loops.
When $N\rightarrow+\infty$, the probability that $(Z_{0:T}^{1},\dots,Z_{0:T}^{q})=(\widetilde{\widetilde{Z}}_{0:T}^{1},\dots,\widetilde{\widetilde{Z}}_{0:T}^{q})$
goes to $1$. The law of $(\widetilde{\widetilde{Z}}_{0:T}^{1},\dots,\widetilde{\widetilde{Z}}_{0:T}^{q})$
does not depend on $N$ (see its law in the theorem below). In fact,
$(\widetilde{\widetilde{Z}}^{i})_{1\leq i\leq q}$ is the asymptotic
object appearing in the limit results (see Proposition \ref{Prop:convq2},
Corollary \ref{Cor:wickproduct}). The following result can be found
in \cite{graham-meleard-1997} (Section 3.4, p. 124) (or, equivalently
\cite{graham-meleard-1994}, Section 5).

\begin{theorem}\label{rem:thgm}

The variable $\widetilde{\widetilde{Z}}_{0:T}^{1}$ has the law $\widetilde{P}_{0:T}$
(recall the definition  $\widetilde{P}$ from below Definition \ref{def:pb-02})\@.

\end{theorem}

\begin{figure}
\caption{Interaction graph for $(Z_{0:T}^{N,1},Z_{0:T}^{N,2})$}

\label{fig:02}

\[\fbox{ \xygraph{!{<0cm,0cm>;<1cm,0cm>:<0cm,0.9cm>::}!{(1,0)}*{}="10"!{(2,0)}{11}="20"!{(4,0)}{1}="40"!{(6,0)}{2}="60"!{(8,0)}{3}="80"!{(10,0)}*{}="100"!{(0,6)}*{}="06"!{(2,6)}*{}="26"!{(4,6)}*{}="46"!{(6,6)}*{}="66"!{(8,6)}*{}="86"!{(4,2)}*{}="42"!{(6,2)}*{}="62"!{(6,3)}*{}="63"!{(8,3)}*{}="83"!{(10,2)}*{}="102"!{(9.9,2)}*{}="g1"!{(10.1,2)}*{}="d1"!{(11,2)}{T_{1}=T''_{1}}="T1"!{(10,3)}*{}="103"!{(10,6)}*{}="106"!{(10,7)}*{}="107"!{(9.9,3)}*{}="g2"!{(10.1,3)}*{}="d2"!{(11,3)}{T_{2}=T'_{2}}!{(10.5,0)}{0}!{(0,0)}*{}="00"!{(10.5,6)}{T}!{(10,-1)}*{}="10-1""40"-"46""60"-"66""83"-"86""80"-@{.}"83""42"-"62""63"-"83""00"-"20""20"-"40""40"-"60""60"-"80""80"-"100""06"-"106""10-1"-@{->}"107""g1"-"d1""62"-@{.}"g1""g2"-"d2""83"-@{.}"g2"} } \]
\end{figure}

\begin{figure}
\caption{Interaction graph for $(\widetilde{Z}_{0:T}^{N,1},\widetilde{Z}_{0:T}^{N,2})$}

\label{fig:03}

\[\fbox{ \xygraph{!{<0cm,0cm>;<1cm,0cm>:<0cm,0.9cm>::}!{(0,0)}*{}="00"!{(2,0)}{11}="20"!{(4,0)}{1}="40"!{(6,0)}{2}="60"!{(8,0)}{3}="80"!{(10,0)}*{}="100"!{(0,6)}*{}="06"!{(2,6)}*{}="26"!{(4,6)}*{}="46"!{(6,6)}*{}="66"!{(8,6)}*{}="86"!{(4,2)}*{}="42"!{(6,2)}*{}="62"!{(6,3)}*{}="63"!{(8,3)}*{}="83"!{(2,4)}*{}="24"!{(2,5)}*{}="25"!{(4,5)}*{}="45"!{(4,4)}*{}="44"!{(10,2)}*{}="102"!{(9.9,2)}*{}="g1"!{(10.1,2)}*{}="d1"!{(11,2)}{\widetilde{T}_{1}=T''_{1}}="T1"!{(10,3)}*{}="103"!{(10,6)}*{}="106"!{(10,7)}*{}="107"!{(9.9,3)}*{}="g2"!{(10.1,3)}*{}="d2"!{(11,3)}{\widetilde{T}_{2}=T'_{2}}!{(10.5,0)}{0}!{(0,0)}*{}="00"!{(10.5,6)}{T}!{(10,4)}*{}="104"!{(9.9,4)}*{}="g3"!{(10.1,4)}*{}="d3"!{(11,4)}{\widetilde{T}_{3}=\widetilde{T}'_{3}}!{(10,-1)}*{}="10-1"!{(2,5)}*{}="25"!{(4,5)}*{}="45"!{(9.9,5)}*{}="995"!{(10.1,5)}*{}="1015"!{(11,5)}{\widetilde{T}_4=\widetilde{T}''_4}="abo""40"-"46""60"-"66""83"-"86""80"-@{.}"83""42"-"62""63"-"83""20"-@{.}"24""24"-"26""24"-"44""00"-"20""20"-"40""40"-"60""60"-"80""80"-"100""06"-"106""10-1"-@{->}"107""g1"-"d1""62"-@{.}"g1""g2"-"d2""83"-@{.}"g2""44"-@{.}"104""g3"-"d3""25"-"45""45"-@{.}"995""995"-"1015"} }\]
\end{figure}

\begin{figure}
\caption{Interaction graph for $(\widetilde{\widetilde{Z}}_{0:T}^{N,1},\widetilde{\widetilde{Z}}_{0:T}^{N,2})$}

\label{fig:04}\[\fbox{ \xygraph{!{<0cm,0cm>;<1cm,0cm>:<0cm,0.9cm>::}!{(1,0)}*{}="10"!{(2,0)}{11}="20"!{(4,0)}{1}="40"!{(6,0)}{2}="60"!{(8,0)}{3}="80"!{(10,0)}*{}="100"!{(0,6)}*{}="06"!{(2,6)}*{}="26"!{(4,6)}*{}="46"!{(6,6)}*{}="66"!{(8,6)}*{}="86"!{(4,2)}*{}="42"!{(6,2)}*{}="62"!{(6,3)}*{}="63"!{(8,3)}*{}="83"!{(10,2)}*{}="102"!{(9.9,2)}*{}="g1"!{(10.1,2)}*{}="d1"!{(10,3)}*{}="103"!{(10,6)}*{}="106"!{(10,7)}*{}="107"!{(9.9,3)}*{}="g2"!{(10.1,3)}*{}="d2"!{(11,3)}{\widetilde{T}_{2}={T}'_{2}}!{(10.5,0)}{0}!{(0,0)}*{}="00"!{(10.5,6)}{T}!{(10,-1)}*{}="10-1"!{(2,4)}*{}="24"!{(4,4)}*{}="44"!{(9.9,4)}*{}="994"!{(10.1,4)}*{}="1014"!{(11,4)}{\widetilde{T}_3=\widetilde{T}'_3}="end""40"-"46""60"-"66""83"-"86""80"-@{.}"83""63"-"83""00"-"20""20"-"40""40"-"60""60"-"80""80"-"100""06"-"106""10-1"-@{->}"107""83"-@{.}"g2""24"-"26""20"-@{.}"24""24"-"44""44"-@{.}"994""994"-"1014"} } \]
\end{figure}

Suppose $q=2$. The figures \ref{fig:02}, \ref{fig:03}, \ref{fig:04}
are realizations of the interaction graphs for $(Z_{0:T}^{1},Z_{0:T}^{2})$,
$(\widetilde{Z}_{0:T}^{1},\widetilde{Z}_{0:T}^{2}$), $(\widetilde{\widetilde{Z}}_{0:T}^{1},\widetilde{\widetilde{Z}}_{0:T}^{2})$,
$N=10$, for the same $\omega$.

\subsection{Proof of Theorem \ref{Theo:boltzmann} (asymptotic development in
the propagation of chaos)\label{sub:Proof-of-Theorem-dl}}

\label{Sec:expansion}

\begin{proof} By Lemma \ref{Lem:equlaw1}, we have for all $l_{0}\geq0$:

\begin{eqnarray*}
\E(F(\bZ_{T}^{1},\dots,\bZ_{T}^{q})) & = & \sum_{0\leq l\leq l_{0}}[\E(F(Z_{T}^{1},\dots,Z_{T}^{q})|L_{T}=l)\p(L_{T}=l)]\\
 &  & +\E(F(Z_{T}^{1},\dots,Z_{T}^{q})|L_{T}\geq l_{0}+1)\\
 &  & \qquad\times\p(L_{T}\geq l_{0}+1).
\end{eqnarray*}
Hence, we take, for any bounded measurable $F$, 
\begin{equation}
\Delta_{q,T}^{N,l}(F)=\E(F(Z_{0:T}^{1},\dots,Z_{0:T}^{q})|L_{T}=l)\p(L_{T}=l)N^{l}\ \label{eq:def-delta}
\end{equation}
 
\begin{equation}
\overline{\Delta}_{q,T}^{N,l}(F)=\E(F(Z_{0:T}^{1},\dots,Z_{0:T}^{q})|L_{T}\geq l)\p(L_{T}\geq l)N^{l}\ .\label{eq:def-delta-barre}
\end{equation}
It is sufficient for the proof to show that $\p(L_{T}\geq l)$ is
of order $\leq1/N^{l}$, for all $l\in\N^{*}$. We write $N^{\lambda}$
for an inhomogeneous Poisson process of rate $(\lambda_{s})_{s\geq0}$.
Using Lemma \ref{Lem:law-K-tilde}, we have 
\begin{eqnarray}
\p(L_{T}\geq l) & \leq & \p(\widetilde{L}_{T}\geq l)\nonumber \\
 & = & \E(\p_{\widetilde{\mathcal{K}}_{t}}(N_{T}^{\Lambda\widetilde{K}_{.}(\widetilde{K}_{.}-1)/(N-1)}\geq l))\nonumber \\
 & \leq & \E(\p_{\widetilde{\mathcal{K}}_{t}}(N_{T}^{\Lambda\widetilde{K}_{T}(\widetilde{K}_{T}-1)/(N-1)}\geq l)\nonumber \\
 & \leq & \E\left(\frac{1}{l!}\left(\frac{\Lambda T\widetilde{K}_{T}(\widetilde{K}_{T}-1)}{N-1}\right)^{l}\right)\,.\label{eq:maj-L}
\end{eqnarray}
 And $\E((\widetilde{K}_{T})^{2l})<\infty$ by (\ref{eq:loi-de-tilde-K-T})
of Lemma \ref{Lem:law-K-tilde}.

\end{proof}

Note that we have the following bounds (for all $F\in C_{b}^{+}(\mathbb{D}([0,T],\R^{d})^{q})$)
\begin{equation}
\sup(\Delta_{q,T}^{N,l}(F),\overline{\Delta}_{q,T}^{N,l}(F))\leq\frac{\left(\Lambda T\right)^{l}}{l!}\E(\widetilde{K}_{T}^{2l})\Vert F\Vert_{\infty}e^{\frac{l}{N-1}}<\infty\,.\label{eq:borne-delta-delta-barre}
\end{equation}

\section{Rate of convergence for centered functions\label{sec:Rate-of-convergence}}

\label{Sec:wick}

\subsection{Definitions and results}

\begin{figure}
\caption{\label{fig:interaction-graph}$q=3$, interaction graph for $(\widetilde{Z}_{0:T}^{1},\widetilde{Z}_{0:T}^{2},\widetilde{Z}_{0:T}^{3})$}

\label{fig:05}

\[\fbox{ \xygraph{!{<0cm,0cm>;<1cm,0cm>:<0cm,1cm>::}!{(0,0)}*{}="00"!{(2,0)}{1}="20"!{(4,0)}{2}="40"!{(6,0)}{3}="60"!{(8,0)}*{}="80"!{(2,6)}*{}="26"!{(4,6)}*{}="46"!{(6,6)}*{}="66"!{(8,-1)}*{}="8-1"!{(8,7)}*{}="87"!{(0,6)}*{}="06"!{(8,6)}*{}="86"!{(1,3)}*{}="13"!{(8,3)}*{}="83"!{(1,6)}*{}="16"!{(1.5,4)}*{}="154"!{(1,4)}*{}="14"!{(1.5,6)}*{}="156"!{(8,4)}*{}="84"!{(2,3)}*{}="23"!{(1,4)}="14"!{(3,1)}*{}="31"!{(4,1)}*{}="41"!{(8,1)}*{}="81"!{(3,6)}*{}="36"!{(4.5,2)}*{}="452"!{(8,2)}*{}="82"!{(4.5,6)}*{}="456"!{(8,4.5)}*{}="845"!{(4,2)}*{}="42"!{(6,3.5)}*{}="635"!{(8,3.5)}*{}="835"!{(5,3.5)}*{}="535"!{(8,3.5)}*{}="835"!{(5,6)}*{}="56"!{(5,5)}*{}="55"!{(4.5,5)}*{}="455"!{(8,5)}*{}="85"!{(6,1.5)}*{}="615"!{(7,1.5)}*{}="715"!{(7,6)}*{}="76"!{(8,1.5)}*{}="815"!{(8.5,0)}{0}!{(9,1)}{\widetilde{T}_{1}=T'_{1}}!{(9,1.5)}{\widetilde{T}_{2}=\widetilde{T}'_{2}}!{(9,2)}{\widetilde{T}_{3}=T'_{2}}!{(9,3)}{\widetilde{T}_{4}=T'_{3}}!{(9,3.5)}{\widetilde{T}_{5}=T'_{4}}!{(9,4)}{\widetilde{T}_{6}=T'_{5}}!{(9,5)}{\widetilde{T}_{7}=T''_{6}}!{(9,6)}{T}"00"-"20""20"-"40""40"-"60""60"-"80""06"-"86""8-1"-@{->}"87""13"-"16""13"-"23""20"-"26""154"-"156""154"-"14""40"-"46""31"-"36""31"-"41""452"-"456""452"-"42""60"-"66""635"-"535""535"-"56""55"-"455""615"-"715""715"-"76""23"-@{.}"83""154"-@{.}"84""41"-@{.}"81""452"-@{.}"82""55"-@{.}"85""635"-@{.}"835""715"-@{.}"815"} }\]
\end{figure}

For $i\in[q]$, $I\subset[q]$ such that $i\in I$, we define the
event
\[
A_{i}^{I}=\left\{ \left\{ \widetilde{T}_{k}:k\geq0,\widetilde{T}_{k}\leq T,\widetilde{T}_{k}\in\mathcal{T}^{I},\widetilde{r}(k)\mbox{ or }\widetilde{j}(k)\in\widetilde{C}_{\widetilde{T}_{k}-}^{i}\right\} \subset\left\{ T'_{k},k\geq1\right\} \right\} \,,
\]
and we set
\[
A_{i}=A_{i}^{[q]}\,.
\]
Recall the definition of $\mathcal{T}^{I}$ from  (\ref{eq:def-Tronde}).
For all $i,l$ such that $1\leq i<q$, $1\leq l\leq q$, we define
\[
\widetilde{L}_{i,i+i}=\left\{ \#\left(\left\{ \widetilde{T}_{k}:\widetilde{T}_{k}\leq T,\#\left(\{\widetilde{r}(k),\widetilde{j}(k)\}\cap(\widetilde{C}_{\widetilde{T}_{k}-}^{i}\cup\widetilde{C}_{\widetilde{T}_{k}-}^{i+1})\right)=2,k\geq0\right\} \right)=1\right\} \,,
\]

\[
\mbox{for }q\mbox{ even, }\widetilde{L}_{1,q}=\widetilde{L}_{1,2}\cap\dots\cap\widetilde{L}_{q-1,q}\cap\left\{ \#\left(\left\{ \widetilde{T}_{k},k\geq1\right\} \cap\left\{ T''_{l},\widetilde{T}''_{l},l\geq1\right\} \right)=q/2\right\} \,,
\]

\[
E_{T}^{l}=\#\left(\{\widetilde{T}_{k},k\geq1,\widetilde{T}_{k}\leq T:\widetilde{r}(k)\in\widetilde{C}_{\widetilde{T}_{k}-}^{l}\}\cap\{\widetilde{T}'_{i},i\geq1\}\right)\,.
\]
Note that we do not write anything about $\widetilde{j}$ in the last
definition because $\widetilde{r}$ and $\widetilde{j}$ do not play
symmetrical roles (see Section \ref{sub:Auxiliary-systems}). For
a fixed $\omega$, we define an relation on $\N$ by 
\[
i\bowtie j\mbox{ if }\exists r\mbox{ such that }\widetilde{T}_{r}\in\{\widetilde{T}''_{l},T''_{l},l\geq1\}\mbox{ and }\#\left(\{\widetilde{r}(r),\widetilde{j}(r)\}\cap\left(\widetilde{C}_{\widetilde{T}_{r}-}^{i}\cap\widetilde{C}_{\widetilde{T}_{r}-}^{j}\right)\right)=2\,.
\]
We extend it into an equivalence relation by imposing $i\bowtie i$,
($i\bowtie j$ and $j\bowtie k$) $\Longrightarrow$ $i\bowtie k$.
For all $i\in[k]$, $\mathcal{C}_{i}$ denote the class of $i$ for
the relation $\bowtie$. For $I\subset[q]$, $k\in[q]$, we define
\begin{equation}
L_{I}=\{\mathcal{C}_{\max(I)}\cap[q]=I\}\,,\label{eq:def-L-I}
\end{equation}

\begin{multline}
L_{I}^{k}=L_{I}\cap(A_{\max(I)}^{I})^{c}\cap\left(\cap_{1\leq i\leq k,i\in I}(A_{i}^{[k]})^{c}\right)\cap\left(\cap_{k+1\leq i\leq q,i\in I\backslash\max(I)}A_{i}^{([k]\cup I)\backslash\max(I)}\right)\,.\label{eq:def-L-I-k-2}
\end{multline}

Let us have a look at Figure \ref{fig:05} to clarify the notions
above. Suppose $\omega\in\Omega$ is such that the graph in Figure
\ref{fig:interaction-graph} occurs. Note that: $\omega\in A_{1}$,
$\omega\in A_{2}^{c}$, $\omega\in A_{3}^{c}$, $\omega\in A_{2}^{\{1,2\}}$,
$\mathcal{C}_{1}=\{1\}$, $\mathcal{C}_{2}=\{2,3\}$, $E_{T}^{3}=1$.

Let us now write some sentences designed to illustrate the meaning
of the definitions above. Let $I\subset[q]$. When a time of $\mathcal{T}^{I}$
is a jump time for $\widetilde{C}^{\widetilde{r}},\widetilde{C}^{\widetilde{j}}$,
we say that a link between $\widetilde{C}^{\widetilde{r}}$ and $\widetilde{C}^{\widetilde{j}}$
is formed at this time%
; and if $\widetilde{r},\widetilde{j}\in I$, we say that this is
an internal link (or a loop) of $(\widetilde{C}^{i})_{i\in I}$. We
can define the same notions for the processes $(C^{k})_{1\leq k\leq q}$.
Using this terminology, the event $A_{l}^{I}$ (with $l\in I$) is
the event that the links happening at a time in $\mathcal{T}^{I}$
and between $\widetilde{C}^{l}$ and the rest are external links of
$\widetilde{C}^{l}$ and do not belong to $\{\widetilde{T}'_{k},k\geq1\}$.
The event $\widetilde{L}_{i,i+1}$ is the event that there is exactly
one link between $\widetilde{C}^{i}$ and $\widetilde{C}^{i+1}$.
The event $\widetilde{L}_{1,q}$ is the event that there is exactly
one link between $\widetilde{C}^{2i-1}$ and $\widetilde{C}^{2i}$
for all $i\in[q/2]$, and that $\widetilde{L}_{T}$%
{} is $q/2$. The relation $i\bowtie j$ expresses that $\widetilde{C}^{i}$
and $\widetilde{C}^{j}$ are linked by a string of links.

\subsection{\label{sub:Technical-lemmas} Technical lemmas}

Before going into the proof of Proposition \ref{Prop:convq2}, we
need some technical results.

\begin{lemma}\label{lem:maj-E-K-tilde}For all $l\in\mathbb{N}^{*}$,
\[
\E(\widetilde{K}_{T}^{l})\leq\frac{(q+l-1)!}{(e^{-\Lambda T})^{l}(1-e^{-\Lambda T})(q-1)!}\,.
\]

\end{lemma}

\begin{proof}

By (\ref{eq:loi-de-tilde-K-T}), with $\alpha=e^{-\Lambda T}$:
\begin{eqnarray*}
\E(\widetilde{K}_{T}^{l}) & = & \sum_{k=q}^{+\infty}k^{l}\left(\begin{array}{c}
k\\
q-1
\end{array}\right)\alpha^{q}(1-\alpha)^{k-q}\\
 & \leq & \frac{\alpha^{q}}{(1-\alpha)(q-1)!}\sum_{k=q}^{+\infty}(k+l)(k+l-1)\dots(k-q+2)(1-\alpha)^{k-q+1}\\
 & \leq & \frac{\alpha^{q}}{(1-\alpha)(q-1)!}\frac{(q+l-1)!}{\alpha^{q+l}}\,.
\end{eqnarray*}

\end{proof}

\begin{lemma}\label{lem:ens-negligeable}For all $r\in\{0,1,\dots,q\}$,
$i_{1},\dots,i_{r}\in[q]$, $l\geq1$,
\[
\p(E_{T}^{i_{1}}\geq1,\dots,E_{T}^{i_{r}}\geq1,\widetilde{L}_{T}\geq l)\leq\frac{(\Lambda T)^{r+l}(q+2r+2l-1)!}{r^{r}(N-1)^{r+l}(e^{-\Lambda T})^{2r+2l}(1-e^{-\Lambda T})(q-1)!l!}\,.
\]

\end{lemma}

\begin{proof}

When $\mathcal{K}_{T}$  is fixed, for any $i$, the law of $\1_{E_{T}^{i}\geq1}$
is Bernoulli of parameter 
\[
1-\exp\left(-\int_{0}^{T}\Lambda K_{t}^{i}-\frac{\Lambda K_{t}^{i}(N-K_{t})_{+}}{N-1}dt\right)
\]
 and $\1_{E_{T}^{i_{1}}\geq1},\dots,\1_{E_{T}^{i_{r}}\geq1}$ are
independent. So we have for all $t$, $\omega$, 
\begin{eqnarray*}
\E_{\mathcal{K}_{T}}\left(\prod_{j=1}^{r}\1_{E_{T}^{i_{j}}\geq1}\right) & \leq & \prod_{j=1}^{r}\frac{\Lambda TK_{T}^{i_{j}}K_{T}}{N-1}\\
\mbox{(as }\sum_{j=1}^{r}K_{T}^{i_{j}}\leq\widetilde{K}_{T}\mbox{)} & \leq & \frac{(\Lambda T)^{r}(\widetilde{K}_{T})^{2r}}{r^{r}(N-1)^{r}}\,.
\end{eqnarray*}
Basic estimates then leads to $ $$ $$\E_{\widetilde{\mathcal{K}}_{T}}\left(\prod_{j=1}^{r}\1_{E_{T}^{i_{j}}\geq1}\right)\leq\frac{(\Lambda T)^{r}(\widetilde{K}_{T})^{2r}}{r^{r}(N-1)^{r}}$.
 We have, as in (\ref{eq:maj-L}), 
\[
\p(\widetilde{L}_{T}\geq l|\widetilde{\mathcal{K}}_{T})\leq\frac{1}{l!}\left(\frac{\Lambda T\widetilde{K}{}_{T}(\widetilde{K}{}_{T}-1)}{N-1}\right)^{l}\,.
\]
Now, as $E_{T}^{i_{1}},\dots,E_{T}^{i_{r}},\widetilde{L}_{T}$ are
independent conditionally to $\widetilde{\mathcal{K}}_{T}$, by Lemma
\ref{lem:maj-E-K-tilde}, we obtain: 
\begin{multline*}
\p(E_{T}^{i_{1}}\geq1,\dots,E_{T}^{i_{r}}\geq1,\widetilde{L}_{T}\geq l)\leq\E\left(\frac{(\Lambda T)^{r+l}(\widetilde{K}_{T})^{2r+2l}}{r^{r}(N-1)^{r+l}l!}\right)\\
\leq\frac{(\Lambda T)^{r+l}(q+2r+2l-1)!}{r^{r}(N-1)^{r+l}l!(e^{-\Lambda T})^{2r+2l}(1-e^{-\Lambda T})(q-1)!}\,.
\end{multline*}

\end{proof}

We write $\mathcal{P}_{m}$ for the set of $m$-uples $(I_{1},\dots,I_{m})$
of subsets of $[q]$ partitioning $[q]$ and such that  $\max(I_{1})>\max(I_{2})>\dots>\max(I_{m})$.\begin{definition}

We define the auxiliary particle systems $(Z_{0:T}^{k,1},Z_{0:T}^{k,2},\dots,Z_{0:T}^{k,k})$
(one system for each $k\in[q]$) by saying that
\begin{itemize}
\item it has interactions at times $\left\{ T-T_{k},k\geq1\right\} \cap\left\{ T-t,t\in\mathcal{T}^{\{1,k\}}\right\} $, 
\item the rest of the definition is the same as for $(\widetilde{Z}_{0:T}^{i})$. 
\end{itemize}
In so doing, we have coupled the interaction times of $(Z_{0:T}^{k,i})_{1\leq i\leq k}$
with the interaction times of the other systems. We can couple further
and assume that $(Z_{0:T}^{1},\dots,Z_{0:T}^{k})$ and $(Z_{0:T}^{k,1},\dots,Z_{0:T}^{k,k})$
coincide on the event $\left\{ T-T_{k},k\geq1\right\} \cap\left\{ T-t,t\in\mathcal{T}^{\{1,k\}}\right\} =\left\{ T-T_{k},k\geq1\right\} $.
Note that for all $k$, the law of $(Z_{0:T}^{k,1},\dots,Z_{0:T}^{k,k})$
is exchangeable.

\end{definition}

\begin{lemma}\label{Lem:qlim} If $q$ is odd then for all $k\in\{0,\dots,q\}$,
for all $F\in\mathcal{B}_{0}^{sym}(q)$, $m\in[q]$, $(I_{1},\dots,I_{m})\in\mathcal{P}_{m}$,
\[
N^{q/2}\E(F(Z_{0:T}^{k,1},\dots,Z_{0:T}^{k,k},\widetilde{\widetilde{Z}}_{0:T}^{k+1},\dots,\widetilde{\widetilde{Z}}_{0:T}^{q})\prod_{i\in[m]}\1_{L_{I_{i}}^{k}})\convN0\,,
\]
(we use the following convention: in the case $k=0$,\\
 $\E(F(Z_{0:T}^{k,1},\dots,Z_{0:T}^{k,k},\widetilde{\widetilde{Z}}_{0:T}^{k+1},\dots,\widetilde{\widetilde{Z}}_{0:T}^{q}))=\E(\widetilde{\widetilde{Z}}_{0:T}^{1},\dots,\widetilde{\widetilde{Z}}_{0:T}^{q})$).

If $q$ is an even integer and $k\in[q]$,

\begin{multline}
N^{q/2}\E(F(Z_{0:T}^{k,1},\dots,Z_{0:T}^{k,k},\widetilde{\widetilde{Z}}_{0:T}^{k+1},\dots,\widetilde{\widetilde{Z}}_{0:T}^{q})\prod_{i\in[m]}\1_{L_{I_{i}}^{k}})\\
\convN\E\left(\E_{\widetilde{\mathcal{K}}_{T}}\left(F(\widetilde{Z}_{0:T}^{1},\dots,\widetilde{Z}_{0:T}^{k},\widetilde{\widetilde{Z}}_{0:T}^{k+1},\dots,\widetilde{\widetilde{Z}}_{0:T}^{q})|\widetilde{L}_{1,q}\right)\right.\\
\left.\times\prod_{i=1}^{q/2}\int_{0}^{T}\Lambda\widetilde{K}_{s}^{2i-1}\widetilde{K}_{s}^{2i}ds\right)\,\label{eq:conv-lacets}
\end{multline}
if 
\begin{equation}
\forall j\,,\,\#I_{j}=2\,\mbox{and}\,(I_{j}\subset[k]\mbox{ or }I_{j}\subset\{k+1,\dots,q\})\,,\label{eq:cond-sur-les-I}
\end{equation}
and 
\[
N^{q/2}\E(F(Z_{0:T}^{k,1},\dots,Z,_{0:T}^{k,k},\widetilde{\widetilde{Z}}_{0:T}^{k+1},\dots,\widetilde{\widetilde{Z}}_{0:T}^{q})\prod_{i\in[m]}\1_{L_{I_{i}}^{k}})\convN0
\]
otherwise.

Moreover, these limits still hold if we replace $(Z_{0:T}^{k,1},\dots,Z_{0:T}^{k,k})$
by $(\widetilde{\widetilde{Z}}_{0:T}^{1},\dots,\widetilde{\widetilde{Z}}_{0:T}^{k})$
and \\
$(\widetilde{Z}_{0:T}^{1},\dots,\widetilde{Z}_{0:T}^{k})$ by $(\widetilde{\widetilde{Z}}_{0:T}^{1},\dots,\widetilde{\widetilde{Z}}_{0:T}^{k})$
in the formulas above.

\end{lemma}

Note hat the expectation in (\ref{eq:conv-lacets}) does not depend
on $N$ and that it cannot be simplified by the use of the tower formula
because the conditional expectation is conditional with respect to
$\widetilde{\mathcal{K}}_{T},\widetilde{L}_{1,q}$.

\begin{proof}We define the event
\begin{equation}
B=\bigcup_{1\leq r\leq q}\bigcup_{1\leq i_{1}<\dots<i_{r}\leq q}\left(\left(\cap_{j=1}^{r}\{E_{T}^{i_{j}}\geq1\}\right)\cap\{\widetilde{L}_{T}\geq\left\lceil \frac{q-r}{2}\right\rceil \right)\,.\label{eq:def-B}
\end{equation}
Suppose that $q$ is odd or that $q$ is even and (\ref{eq:cond-sur-les-I})
does not hold. Then for all $m\in[q]$, $(I_{1},\dots,I_{m})\in\mathcal{P}_{m}$,
$k\in[q]$, 
\begin{equation}
\bigcap_{1\leq j\leq m}L_{I_{j}}^{k}\subset\left(B\cup\{\widetilde{L}_{T}\geq\frac{q}{2}+1\}\right)\,.\label{eq:inclusion-chantme}
\end{equation}
Let us write a short justification of the last formula. Suppose that
$m\in[q]$, $(I_{1},\dots,I_{m})\in\mathcal{P}_{m}$, $\omega\in\bigcap_{1\leq j\leq m}L_{I_{j}}^{k}$.
We note that, for all $I\subset[q]$, $\omega\in L_{I}\Rightarrow\widetilde{L}_{T}^{I}(\omega)\geq\#I-1$
(recall the definition of $\widetilde{L}_{T}^{I}$ from (\ref{eq:def-L-tilde-(j-1,j)})).
We note also that $\left(\omega\in L_{I}^{k}\mbox{ with }I=\{i\}\right)\Rightarrow E_{T}^{i}(\omega)\geq1$.
Indeed, for such an $\omega$, $\omega\in\left(A_{i}^{\{i\}}\right)^{c}$
so there exists a $\widetilde{T}_{r}\in\mathcal{T}^{\{i\}}$ such
that $\widetilde{T}_{r}\notin\{T'_{l},l\geq1\}$; as $\omega\in L_{\{i\}}$,
$i$ is not linked to any $i'$ in $[q]$ by the relation $\bowtie$,
so $E_{T}^{i}(\omega)\geq1$. %
{} So, we have (\ref{eq:inclusion-chantme}) for $q$ odd. If $q$ is
even and we do not have (\ref{eq:cond-sur-les-I}), then (by the same
reasoning as above):
\begin{itemize}
\item If there exists $j$ such that $\#I_{j}\neq2$, then $\omega\in B\cup\{\widetilde{L}_{T}\geq\frac{q}{2}+1\}$.
\item If $\#I_{j}=2$, for all $j$, and, say, for some $l$, $[k]\cap I_{l}\neq\emptyset$,
$\{k+1,\dots,q\}\cap I_{l}\neq\emptyset$, then $\sum_{j=1}^{m}\widetilde{L}_{T}^{I_{j}}\geq q/2$.
Let $i_{0}=\min(I_{l})$. As $\omega\in L_{I_{l}}^{k}$, we have $\omega\in(A_{i_{0}}^{[k]})^{c}$,
so: either $E_{T}^{i_{0}}\geq1$, either there exists $r\neq i_{0}$,
$r\in[k]$  such that $\widetilde{L}_{T}^{\{r,i_{0}\}}\geq1$. So,
$\omega\in B\cup\{\widetilde{L}_{T}\geq\frac{q}{2}+1\}$.
\end{itemize}
Using Lemma \ref{lem:ens-negligeable} and Equation (\ref{eq:inclusion-chantme}),
we then have
\[
N^{q/2}\p(\bigcap_{1\leq j\leq m}L_{I_{j}}^{k})\convN0\,.
\]
Now, suppose that $q$ is even and that (\ref{eq:cond-sur-les-I})
holds. Note that $(Z_{0:T}^{k,1},\dots,Z_{0:T}^{k,k})$ and $(\widetilde{Z}_{0:T}^{1},\dots,\widetilde{Z}_{0:T}^{k})$
coincide on the event $\widetilde{L}_{1,q}\cap\{E_{T}^{1}=\dots=E_{T}^{k}=0\}$.
Then, using the symmetries of the problem and Lemma \ref{lem:ens-negligeable},
we see that the limit we are looking for is the same as
\begin{multline*}
\lim_{N\rightarrow+\infty}N^{q/2}\E(F(Z_{0:T}^{k,1},\dots,Z_{0:T}^{k,k},\widetilde{\widetilde{Z}}_{0:T}^{k+1},\dots,\widetilde{\widetilde{Z}}_{0:T}^{q})\1_{\widetilde{L}_{1,q}})\\
=\lim_{N\rightarrow+\infty}N^{q/2}\E(F(\widetilde{Z}_{0:T}^{1},\dots,\widetilde{Z}_{0:T}^{k},\widetilde{\widetilde{Z}}_{0:T}^{k+1},\dots,\widetilde{\widetilde{Z}}_{0:T}^{q})\1_{\widetilde{L}_{1,q}})\,.
\end{multline*}
We set for all $j,t$ 
\[
\widetilde{L}'_{1,q}=\left\{ \widetilde{L}_{T}^{\{1,2\}}\geq1\right\} \cap\dots\cap\left\{ \widetilde{L}_{T}^{\{q-1,q\}}\geq1\right\} \,.
\]
 We set for all $j\in[q/2]$
\[
\alpha(2j-1,2j)=\exp\left(-\int_{0}^{T}\frac{\Lambda\widetilde{K}_{s}^{2j-1}\widetilde{K}_{s}^{2j}}{N-1}ds\right)\,.
\]
 We have

\begin{eqnarray}
N^{q/2}\p_{\widetilde{\mathcal{K}}_{T}}(\widetilde{L}_{1,q}') & = & \prod_{1\leq j\leq q/2}\left[N(1-\alpha(2j-1,2j))\right]\label{eq:prod-alpha}\\
 & \overset{\mbox{a.s.}}{\convN} & \prod_{1\leq j\leq q/2}\int_{0}^{T}\Lambda\widetilde{K}_{s}^{2j-1}\widetilde{K}_{s}^{2j}ds\ .\nonumber 
\end{eqnarray}
 We have 
\[
\p_{\widetilde{\mathcal{K}}_{T}}(\widetilde{L}'_{1,q}\backslash\widetilde{L}_{1,q})\leq\p_{\widetilde{\mathcal{K}}_{T}}(\widetilde{L}_{T}>q/2)\,.
\]
 So, by Lemma \ref{lem:ens-negligeable}, $N^{q/2}\p_{\widetilde{\mathcal{K}}_{T}}(\widetilde{L}'_{1,q}\backslash\widetilde{L}_{1,q})\overset{\mbox{a.s.}}{\convN}0$.
And so :
\[
N^{q/2}\p_{\widetilde{\mathcal{K}}_{T}}(\widetilde{L}_{1,q})\overset{\mbox{a.s.}}{\convN}\prod_{1\leq i\leq q/2}\int_{0}^{T}\Lambda\widetilde{K}_{s}^{2i-1}\widetilde{K}_{s}^{2i}ds\ \,.
\]
 Now,
\begin{eqnarray*}
N^{q/2}\p_{\tilde{\mathcal{K}}_{T}}(\widetilde{L}_{1,q}) & \leq & N^{q/2}\p_{\tilde{\mathcal{K}}_{T}}(\widetilde{L}_{1,q}')\\
\mbox{(by (\ref{eq:prod-alpha}))} & \leq & \left(\frac{N}{N-1}\right)^{q/2}\prod_{1\leq j\leq q/2}\int_{0}^{T}\Lambda\tilde{K}_{s}^{2i-1}\tilde{K}_{s}^{2i}ds\\
 & \leq & 2^{q/2}T^{q/2}\Lambda^{q/2}(\widetilde{K}_{T})^{q}\,,
\end{eqnarray*}
 which is of finite expectation by (\ref{eq:loi-de-tilde-K-T}). Thus,
using the dominated convergence theorem, we obtain
\begin{multline*}
\lim_{N\rightarrow+\infty}\E(\E_{\widetilde{\mathcal{K}}_{T}}(F(\widetilde{Z}_{0:T}^{1},\dots,\widetilde{Z}_{0:T}^{k},\widetilde{\widetilde{Z}}_{0:T}^{k+1},\dots,\widetilde{\widetilde{Z}}_{0:T}^{q})|\widetilde{L}_{1,q})N^{q/2}\p_{\widetilde{\mathcal{K}}_{T}}(\widetilde{L}_{1,q}))\\
=\E\left(\E_{\widetilde{\mathcal{K}}_{T}}(F(\widetilde{Z}_{0:T}^{1},\dots,\widetilde{Z}_{0:T}^{k},\widetilde{\widetilde{Z}}_{0:T}^{k+1},\dots,\widetilde{\widetilde{Z}}_{0:T}^{q})|\widetilde{L}_{1,q})\prod_{i=1}^{q/2}\int_{0}^{T}\Lambda\widetilde{K}_{s}^{2i-1}\widetilde{K}_{s}^{2i}ds\right)\,.
\end{multline*}
 By Lemma \ref{Lem:fondamental}, \ref{lem-point-01} and Lemma \ref{Lem:law-K-tilde},
\ref{lemKtilde:05}, the law of 
\[
\E_{\widetilde{\mathcal{K}}_{T}}(F(\widetilde{Z}_{0:T}^{1},\dots,\widetilde{Z}_{0:T}^{k},\widetilde{\widetilde{Z}}_{0:T}^{k+1},\dots,\widetilde{\widetilde{Z}}_{0:T}^{q})|\widetilde{L}_{1,q})
\]
 does not depend on $N$.

We do not write the proof of the last point of the lemma because it
is very similar to what is written above.

\end{proof}

\subsection{\label{sub:Proof-of-Proposition-termes-centres}Proof of Proposition
\ref{Prop:convq2}}

\begin{proof}

Since $Z_{0:T}^{i}=\widetilde{\widetilde{Z}}_{0:T}^{i}$ on $A_{i}$
(for all $i$), we have

\begin{multline}
\E(F(Z_{0:T}^{1},\dots,Z_{0:T}^{q}))=\\
\E(F(Z_{0:T}^{1},\dots,Z_{0:T}^{q})\1_{(A_{1}\cap\dots\cap A_{q})^{c}})+\E(F(Z_{0:T}^{1},\dots,Z_{0:T}^{q})\1_{A_{1}\cap\dots\cap A_{q}})=\\
\E(F(Z_{0:T}^{1},\dots,Z_{0:T}^{q})\1_{(A_{1}\cap\dots\cap A_{q})^{c}})+\E(F(\widetilde{\widetilde{Z}}_{0:T}^{1},\dots,\widetilde{\widetilde{Z}}_{0:T}^{q}))-\E(F(\widetilde{\widetilde{Z}}{}_{0:T}^{1},\dots,\widetilde{\widetilde{Z}}_{0:T}^{q})\1_{(A_{1}\cap\dots\cap A_{q})^{c}})=\\
\E(F(Z_{0:T}^{1},\dots,Z_{0:T}^{q})\1_{(A_{1}\cap\dots\cap A_{q})^{c}})-\E(F(\widetilde{\widetilde{Z}}_{0:T}^{1},\dots,\widetilde{\widetilde{Z}}_{0:T}^{q})\1_{(A_{1}\cap\dots\cap A_{q})^{c}})\,,\label{eq:dec-01}
\end{multline}
where, to obtain $\E(F(\widetilde{\widetilde{Z}}_{0:T}^{1},\dots,\widetilde{\widetilde{Z}}_{0:T}^{q}))=0$,
we have used the fact that $\widetilde{\widetilde{Z}}_{0:T}^{q}$
is independent of $\widetilde{\widetilde{Z}}_{0:T}^{1}\dots,\widetilde{\widetilde{Z}}_{0:T}^{q-1}$,
and that $\int_{\mathbb{D}([0,T],\R^{d})}F(z_{1},\dots z_{q})\widetilde{P}_{0:T}(dz_{q})=0$,
for all $z_{1},\dots,z_{q-1}$. This kind of reasoning will be used
again in the following. We have (using the symmetry of the problem)

\begin{multline*}
\E(F(Z_{0:T}^{1},\dots,Z_{0:T}^{q})\1_{(A_{1}\cap\dots\cap A_{q})^{c}})\\
=\sum_{k=1}^{q}\left(\begin{array}{c}
q\\
k
\end{array}\right)\E(F(Z_{0:T}^{1},\dots,Z_{0:T}^{q})\1_{A_{1}^{c}}\dots\1_{A_{k}^{c}}\1_{A_{k+1}}\dots\1_{A_{q}})\\
=\sum_{k=1}^{q}\left(\begin{array}{c}
q\\
k
\end{array}\right)\E(F(Z_{0:T}^{k,1},\dots,Z_{0:T}^{k,k},\widetilde{\widetilde{Z}}_{0:T}^{k+1},\dots,\widetilde{\widetilde{Z}}_{0:T}^{q})\1_{(A_{1}^{[k]})^{c}}\dots\1_{(A_{k}^{[k]})^{c}}\1_{A_{k+1}}\dots\1_{A_{q}})\,.
\end{multline*}
For all $H_{1}\subset H_{2}\subset H_{3}\subset[q]$, such that $H_{3}\backslash H_{2}\neq\emptyset$
(the equality being otherwise obvious), we have 
\begin{equation}
\cap_{i\in H_{3}\backslash H_{1}}A_{i}^{H_{3}}=\left(\cap_{i\in H_{2}\backslash H_{1}}A_{i}^{H_{2}}\right)\cap\left(\cap_{i\in H_{3}\backslash H_{2}}A_{i}^{H_{3}}\right)\,.\label{eq:ensembles-01}
\end{equation}
This will be used many times in this proof. We obtain already (with
$H_{1}=[k]$, $H_{2}=[q-1]$, $H_{3}=[q]$ ) 
\begin{multline}
\E(F(Z_{0:T}^{1},\dots,Z_{0:T}^{q})\1_{(A_{1}\cap\dots\cap A_{q})^{c}})\\
=\E(F(Z_{0:T}^{1},\dots,Z_{0:T}^{q})\1_{A_{1}^{c}\cap\dots\cap A_{q}^{c}}+\sum_{k=1}^{q-1}\left(\begin{array}{c}
q\\
k
\end{array}\right)\E(F(Z_{0:T}^{k,1},\dots,Z_{0:T}^{k,k},\widetilde{\widetilde{Z}}_{0:T}^{k+1},\dots,\widetilde{\widetilde{Z}}_{0:T}^{q})\\
\times\prod_{i=1}^{k}\1_{(A_{i}^{[k]})^{c}}\prod_{i=k+1}^{q-1}\1_{A_{i}^{[q-1]}}\times(1-\1_{A_{q}^{c}}))\,.\label{eq:ensembles-05}
\end{multline}
We look at one term in the last sum above for a fixed $k\leq q-1$.
As $\widetilde{\widetilde{Z}}_{0:T}^{q}$ is independent of $Z_{0:T}^{k,1},\dots,Z_{0:T}^{k,k},\widetilde{\widetilde{Z}}_{0:T}^{k+1},\dots,\widetilde{\widetilde{Z}}_{0:T}^{q-1}$
and $\1{}_{A_{1}^{[k]}}$, \ldots,$\1_{A_{k}^{[k]}}$,$\1_{A_{k+1}^{[q-1]}}$,
\ldots, $\1_{A_{q-1}^{[q-1]}}$, we obtain that this quantity is
equal to 
\begin{equation}
\E(F(Z_{0:T}^{k,1},\dots,Z_{0:T}^{k,k},\widetilde{\widetilde{Z}}_{0:T}^{k+1},\dots,\widetilde{\widetilde{Z}}_{0:T}^{q})\1_{(A_{1}^{[k]})^{c}}\dots\1_{(A_{k}^{[k]})^{c}}\times\1_{A_{k+1}^{[q-1]}}\dots\1_{A_{q-1}^{[q-1]}}(-\1_{A{}_{q}^{c}}))\,.\label{eq:ensembles-05bis}
\end{equation}
Let us set $i_{1}=q$. Let $I_{1}\subset[q]$. Recall the definition
of $L_{I_{1}}^{k}$ from (\ref{eq:def-L-I-k-2}) and the definition
of $L_{I_{1}}$ from (\ref{eq:def-L-I}). For all $I_{1}\subset[q]$
such that $q\in I_{1}$, we have
\begin{equation}
A_{q}^{c}\cap L_{I_{1}}=(A_{q}^{I_{1}})^{c}\cap L_{I_{1}}\,,\label{eq:ins-I}
\end{equation}
then (using (\ref{eq:ensembles-01}) with $H_{1}=[k]$, $H_{2}=[k]\cup(I_{1}\cap\{k+1,\dots,q-1\})$,
$H_{3}=[q-1]$)
\begin{equation}
\cap_{k+1\leq i\leq q-1}A_{i}^{[q-1]}=\left(\cap_{i\in I_{1}\cap\{k+1,\dots,q-1\}}A_{i}^{[k]\cup(I_{1}\cap\{k+1,\dots,q-1\})}\right)\cap\left(\cap_{k+1\leq i\leq q-1,i\notin I_{1}}A_{i}^{[q-1]}\right)\,.\label{eq:use-H}
\end{equation}
We have:
\begin{multline}
\left(\cap_{1\leq i\leq k}(A_{i}^{[k]})^{c}\right)\cap\left(\cap_{k+1\leq i\leq q-1}A_{i}^{[q-1]}\right)\cap A_{q}^{c}=\\
\bigsqcup_{I_{1}\subset[q]\,,\,q\in I_{1}}\left[\left(\cap_{1\leq i\leq k}(A_{i}^{[k]})^{c}\right)\cap\left(\cap_{k+1\leq i\leq q-1}A_{i}^{[q-1]}\right)\cap A_{q}^{c}\cap L_{I_{1}}\right]\,.\label{eq:ensembles-02}
\end{multline}
(the symbol $\sqcup$ means ``disjoint union''). For each term in
the union above, we have (using (\ref{eq:ins-I}), (\ref{eq:use-H}))
\begin{multline}
\left(\cap_{1\leq i\leq k}(A_{i}^{[k]})^{c}\right)\cap\left(\cap_{k+1\leq i\leq q-1}A_{i}^{[q-1]}\right)\cap A_{q}^{c}\cap L_{I_{1}}\\
=\left(\cap_{i\in[k],i\notin I_{1}}(A_{i}^{[k]})^{c}\right)\cap\left(\cap_{k+1\leq i\leq q-1,i\notin I_{1}}A_{i}^{[q-1]}\right)\cap L_{I_{1}}^{k}\,.\label{eq:ensembles-03}
\end{multline}
\[
\]
 For each $I_{1}\subset[q]$ such that $q\in I_{1}$, we set $i_{2}=\max([q]\backslash I_{1})$,
if it exists. 

If $i_{2}\geq k+1$, we can then write (using (\ref{eq:ensembles-01})
with $H_{3}=[q-1]$, $H_{2}=[q-1]\backslash\{i_{2}\}$, $H_{1}=[k]\cup(I_{1}\cap[q-1])$)
\begin{multline}
\left(\cap_{i\in[k],i\notin I_{1}}(A_{i}^{[k]})^{c}\right)\cap\left(\cap_{k+1\leq i\leq q-1,i\notin I_{1}}A_{i}^{[q-1]}\right)\cap L_{I_{1}}^{k}=\\
\left(\cap_{i\in[k],i\notin I_{1}}(A_{i}^{[k]})^{c}\right)\cap\left(\cap_{k+1\leq i\leq q-1,i\notin(I_{1}\cup\{i_{2}\})}A_{i}^{[q-1]\backslash\{i_{2}\}}\right)\cap A_{i_{2}}^{[q-1]}\cap L_{I_{1}}^{k}\,.\label{eq:ensembles-04}
\end{multline}
 $\widetilde{\widetilde{Z}}_{0:T}^{i_{2}}$ is independent of $Z_{0:T}^{k,1}$,
\ldots, $Z_{0:T}^{k,k}$, $(\widetilde{\widetilde{Z}}_{0:T}^{j})_{j\in\{k+1,\dots,q\}\backslash\{i_{2}\}}$,
$\1_{L_{I_{1}}^{k}}$, $(\1_{(A_{j}^{[k]})^{c}})_{1\leq j\leq k,j\notin I_{1}}$,\\
 $(\1_{A_{j}^{[q-1]\backslash\{i_{2}\}}})_{k+1\leq j\leq q-1,j\notin\{I_{1}\cup\{i_{2}\}\}}$.
So, for $I_{1}$ such that $i_{2}\geq k+1$, we obtain (using that
$F\in\mathcal{B}_{0}^{sym}(q)\mbox{)}$ and (\ref{eq:ensembles-03}),
(\ref{eq:ensembles-04})) 
\begin{multline*}
\E(F(Z_{0:T}^{k,1},\dots,Z_{0:T}^{k,k},\widetilde{\widetilde{Z}}_{0:T}^{k+1},\dots,\widetilde{\widetilde{Z}}_{0:T}^{q})\prod_{i\in[k]}\1_{(A_{i}^{[k]})^{c}}\prod_{k+1\leq i\leq q-1}\1_{A_{i}^{[q-1]}}\times\1_{A_{q}^{c}}\1_{L_{I_{1}}})=\\
\E(F(\dots)\prod_{i\in[k],i\notin I_{1}}\1_{(A_{i}^{[k]})^{c}}\prod_{k+1\leq i\leq q-1\,,\,i\notin I_{1}\cup\{i_{2}\}}\1_{A_{i}^{[q-1]\backslash\{i_{2}\}}}\times(1-\1_{(A_{i_{2}}^{[q-1]})^{c}})\1_{L_{I_{1}}^{k}})=\\
-\E(F(\dots)\prod_{i\in[k],i\notin I_{1}}\1_{(A_{i}^{[k]})^{c}}\prod_{k+1\leq i\leq q-1\,,\,i\notin I_{1}\cup\{i_{2}\}}\1_{A_{i}^{[q-1]\backslash\{i_{2}\}}}\times\1_{(A_{i_{2}}^{[q-1]})^{c}}\1_{L_{I_{1}}^{k}})\,.
\end{multline*}

For $I_{1}$ such that $i_{2}\in[k]$, we have (using (\ref{eq:ensembles-03}))
\begin{multline*}
\E(F(Z_{0:T}^{k,1},\dots,Z_{0:T}^{k,k},\widetilde{\widetilde{Z}}_{0:T}^{k+1},\dots,\widetilde{\widetilde{Z}}_{0:T}^{q})\prod_{i\in[k]}\1_{(A_{i}^{[k]})^{c}}\prod_{k+1\leq i\leq q-1}\1_{A_{i}^{[q-1]}}\times\1_{A_{q}^{c}}\1_{L_{I_{1}}})\\
=\E(F(\dots)\prod_{i\in[k],i\notin I_{1}}\1_{(A_{i}^{[k]})^{c}}\times\1_{L_{I_{1}}^{k}})\,.
\end{multline*}
Recall $\mathcal{P}_{m}$ defined in Section \ref{sub:Technical-lemmas}.
Starting from (\ref{eq:ensembles-05}), (\ref{eq:ensembles-05bis}),
(\ref{eq:ensembles-02}), and proceeding recursively, we obtain that
\begin{multline}
\E(F(Z_{0:T}^{1},\dots,Z_{0:T}^{q})\1_{\left(A_{1}\cap\dots\cap A_{q}\right)^{c}})=\\
\sum_{k=1}^{q}\sum_{m=1}^{q}\left(\begin{array}{c}
q\\
k
\end{array}\right)\sum_{(I_{1},\dots,I_{m})\in\mathcal{P}_{m}}(-1)^{s(I_{1,}\dots,I_{m})}\times\E(F(Z_{0:T}^{k,1},\dots,Z_{0:T}^{k,k},\widetilde{\widetilde{Z}}_{0:T}^{k+1},\dots,\widetilde{\widetilde{Z}}_{0:T}^{q})\prod_{j\in[m]}\1_{L_{I_{j}}^{k}})\,,\label{eq:dec-02}
\end{multline}
where $s(I_{1},\dots,I_{m})=\#\{j\in[m],\max(I_{j})\geq k+1\}$. And
in the same way 
\begin{multline}
\E(F(\widetilde{\widetilde{Z}}_{0:T}^{1},\dots,\widetilde{\widetilde{Z}}_{0:T}^{q})\1_{\left(A_{1}\cap\dots\cap A_{q}\right)^{c}})=\\
\sum_{k=1}^{q}\sum_{m=1}^{q}\left(\begin{array}{c}
q\\
k
\end{array}\right)\sum_{(I_{1},\dots,I_{m})\in\mathcal{P}_{m}}(-1)^{s(I_{1,}\dots,I_{m})}\times\E(F(\widetilde{\widetilde{Z}}_{0:T}^{1},\dots,\widetilde{\widetilde{Z}}_{0:T}^{q})\prod_{j\in[m]}\1_{L_{I_{j}}^{k}})\,,\label{eq:dec-03}
\end{multline}
By Lemma \ref{Lem:qlim}, we then obtain that, for $q$ even, using
the symmetry of the problem (recall the definition of $J_{k}$ from
(\ref{eq:def-J-k})), 
\begin{multline*}
N^{q/2}\E(F(Z_{0:T}^{1},\dots,Z_{0:T}^{q})\1_{\left(A_{1}\cap\dots\cap A_{q}\right)^{c}})\convN\\
\sum_{1\leq k\leq q\,,\,k\text{ even}}(-1)^{\frac{q-k}{2}}\left(\begin{array}{c}
q\\
k
\end{array}\right)J_{k}J_{q-k}\\
\times\E(\E_{\widetilde{\mathcal{K}}_{T}}(F(\widetilde{Z}_{0:T}^{1},\dots,\widetilde{Z}_{0:T}^{k},\widetilde{\widetilde{Z}}_{0:T}^{k+1},\dots,\widetilde{\widetilde{Z}}_{0:T}^{q})|\widetilde{L}_{1,q})\prod_{1\leq i\leq q/2}\int_{0}^{T}\Lambda\widetilde{K}_{s}^{2i-1}\widetilde{K}_{s}^{2i}ds)\,,
\end{multline*}
\begin{multline*}
N^{q/2}\E(F(\widetilde{\widetilde{Z}}_{0:T}^{1},\dots,\widetilde{\widetilde{Z}}_{0:T}^{q})\1_{\left(A_{1}\cap\dots\cap A_{q}\right)^{c}})\convN\\
\sum_{1\leq k\leq q,\,k\mbox{ even}}(-1)^{\frac{q-k}{2}}\left(\begin{array}{c}
q\\
k
\end{array}\right)J_{k}J_{q-k}\\
\times\E(\E_{\widetilde{\mathcal{K}}_{T}}(F(\widetilde{\widetilde{Z}}_{0:T}^{1},\dots,\widetilde{\widetilde{Z}}_{0:T}^{q})|\widetilde{L}_{1,q})\prod_{1\leq i\leq q/2}\int_{0}^{T}\Lambda\widetilde{K}_{s}^{2i-1}\widetilde{K}_{s}^{2i}ds)\,,
\end{multline*}
and for $q$ odd, these limits are $0$. We then use the equality
$\left(\begin{array}{c}
q\\
k
\end{array}\right)J_{k}J_{q-k}=\left(\begin{array}{c}
q/2\\
k/2
\end{array}\right)J_{q}$ to finish the proof.

\end{proof}

\begin{corollary}\label{cor:terme-k-borne}For $F\in\mathcal{B}_{0}^{sym}(q)$,
we have
\[
\left|\E((\eta_{0:T}^{N})^{\odot q}(F))\right|\leq\frac{2^{2q+1}(\Lambda T\vee1)^{q+1}(3q)!}{(e^{-\Lambda T})^{2q+1}(1-e^{-\Lambda T})q!(q-1)!}\frac{\Vert F\Vert_{\infty}}{(N-1)^{q/2}}\left(\frac{1}{\left(\left\lceil \frac{q}{4}\right\rceil \right)!}+\frac{1}{(N-1)^{\frac{q}{4}}}\right)\,.
\]

\end{corollary}

\begin{proof}

For $m,m'\in[q]$ and $(I_{1}\dots,I_{m})\in\mathcal{P}_{m}$, $(I'_{1}\dots,I'_{m'})\in\mathcal{P}_{m'}$,
we have that $(\cap_{j\in[m]}L_{I_{j}})\cap(\cap_{j\in[m']}L_{I'_{j}})=\emptyset$
if $(I_{1}\dots,I_{m})\neq(I'_{1}\dots,I'_{m'})$. Note, also, that
for all $m,k\in[q]$, $(I_{1}\dots,I_{m})\in\mathcal{P}_{m}$, we
have that $\cap_{j\in[m]}L_{I_{j}}^{k}\subset\left(B\cup\{\widetilde{L}_{T}\geq\left\lceil q/2\right\rceil \}\right)$
(by (\ref{eq:inclusion-chantme}) and the inequality $\frac{q}{2}+1\geq\left\lceil \frac{q}{2}\right\rceil $).
Hence, by (\ref{eq:dec-02}) and Lemma \ref{lem:ens-negligeable},
\begin{multline*}
\left|\E(F(Z_{0:T}^{1},\dots,Z_{0:T}^{q})\1_{(A_{1}\cap\dots\cap A_{q})^{c}})\right|\\
\leq\sum_{k=1}^{q}\left(\begin{array}{c}
q\\
k
\end{array}\right)\p(B\cup\{\widetilde{L}_{T}\geq\left\lceil q/2\right\rceil \})\Vert F\Vert_{\infty}\\
\leq2^{q}\p(B\cup\{\widetilde{L}_{T}\geq\left\lceil q/2\right\rceil \})\Vert F\Vert_{\infty}\\
\leq2^{q}\sum_{r=0}^{q}\left(\begin{array}{c}
q\\
r
\end{array}\right)\p(E_{T}^{1}\geq1,\dots,E_{T}^{r}\geq1,\widetilde{L}_{T}\geq\left\lceil \frac{q-r}{2}\right\rceil )\Vert F\Vert_{\infty}\\
\mbox{(note that }\forall r\in\{0,\dots,q\},\,\mbox{using the notation }l=\left\lceil \frac{q-r}{2}\right\rceil \mbox{: }\,2r+2l\leq2q+1\,,\,r+l\leq q+1,\\
\frac{(q+2r+2l-1)!}{(q-1)!r^{r}l!}\leq\frac{(2q+r)!}{(q-1)!r!l!}\leq\frac{(3q)!}{(q-1)!q!}\mbox{ ) }\\
\leq2^{q}\sum_{r=0}^{q}\left(\begin{array}{c}
q\\
r
\end{array}\right)\frac{(\Lambda T\vee1)^{q+1}(3q)!\Vert F\Vert_{\infty}}{(N-1)^{r+\left\lceil \frac{q-r}{2}\right\rceil }(e^{-\Lambda T})^{2q+1}(1-e^{-\Lambda T})(q-1)!q!\left(\left\lceil \frac{q-r}{2}\right\rceil \right)!}\\
\leq2^{q}\frac{(\Lambda T\vee1)^{q+1}(3q)!\Vert F\Vert_{\infty}}{(e^{-\Lambda T})^{2q+1}(1-e^{-\Lambda T})(q-1)!q!}\left(\sum_{r=0}^{\lfloor q/2\rfloor}\left(\begin{array}{c}
q\\
r
\end{array}\right)\frac{1}{(N-1)^{q/2}\left(\left\lceil \frac{q}{4}\right\rceil \right)!}\right.\\
\left.+\sum_{r=\lfloor q/2\rfloor+1}^{q}\left(\begin{array}{c}
q\\
r
\end{array}\right)\frac{1}{(N-1)^{q/2}(N-1)^{\frac{q}{4}}}\right)\\
\leq\frac{2^{2q}(\Lambda T\vee1)^{q+1}(3q)!}{(e^{-\Lambda T})^{2q+1}(1-e^{-\Lambda T})(q-1)!q!}\frac{\Vert F\Vert_{\infty}}{(N-1)^{q/2}}\left(\frac{1}{\left(\left\lceil \frac{q}{4}\right\rceil \right)!}+\frac{1}{(N-1)^{\frac{q}{4}}}\right)\,.
\end{multline*}
The same is true if we replace the $Z^{i}$'s by $\widetilde{\widetilde{Z}}^{i}$'s.
Thus (\ref{eq:dec-01}) gives us the desired bound.

\end{proof}

\subsection{Wick formula}

We suppose here that $q$ is even. 

\begin{definition}

We introduce an auxiliary infinite system of particles $(\check{Z}_{0:T}^{1},\check{Z}_{0:T}^{2},\dots)$
such that
\begin{itemize}
\item it has interaction times $\left\{ T-\widetilde{T}_{k},k\geq1\right\} \cap\left\{ T-t,t\in\mathcal{T}^{\{1,2\}}\cup\dots\cup\mathcal{T}^{\{q-1,q\}}\right\} $ 
\item the rest of the definition is the same as for $(\widetilde{Z}_{0:T}^{i})_{i\geq1}$. 
\end{itemize}
By doing this, we have coupled the interaction times of the system
$(\check{Z}_{0:T}^{i})_{i\geq1}$ and of the system $(\widetilde{Z}_{0:T}^{i})_{i\geq1}$.
We can couple further and assume that $(\check{Z}_{0:T}^{1},\dots,\check{Z}_{0:T}^{q})$
and $(\widetilde{\widetilde{Z}}_{0:T}^{1},\dots,\widetilde{\widetilde{Z}}_{0:T}^{q})$
coincide on the event $ $$\left\{ \widetilde{T}_{k},k\geq1\right\} \cap\left(\mathcal{T}^{\{1,2\}}\cup\dots\cup\mathcal{T}^{\{q-1,q\}}\right)=\left\{ \widetilde{T}_{k},k\geq1\right\} $.

\end{definition}

The system $(\check{Z}_{0:T}^{i})_{1\leq i}$ is obtained from $(\widetilde{Z}_{0:T}^{i})_{1\leq i\leq k}$
by stripping off the links which are not internal to $\widetilde{C}^{1},\widetilde{C}^{2}$
or $\widetilde{C}^{3},\widetilde{C}^{4}$, \ldots . 

We set for all $f,g$ bounded $\mathbb{D}(\R_{+},\R^{d})\rightarrow\R$
\begin{equation}
V_{0:T}(f,g)=\E\left(\E(f(\check{Z}_{0:T}^{1})g(\check{Z}_{0:T}^{2})-f(\widetilde{\widetilde{Z}}_{0:T}^{1})g(\widetilde{\widetilde{Z}}_{0:T}^{2})|\widetilde{L}_{1,2},\widetilde{K}_{0:T}^{1},\widetilde{K}_{0:T}^{2})\int_{0}^{T}\Lambda\widetilde{K}_{s}^{1}\widetilde{K}_{s}^{2}ds\right)\,.\label{eq:def-V-0-T}
\end{equation}
 Note that for all $f,g$, $V_{0:T}(f,g)=V_{0:T}(g,f)$. Note that
the formula above cannot be simplified with the use of the tower formula.
For all $k\in\N^{*}$, we set $\mathcal{I}_{k}$ to be the set of
partitions of $[k]$ into subsets of cardinality $2$.

\begin{corollary}{[}Wick formula{]} \label{Cor:wickproduct} For
$F\in\Bsym(q)$ of the form $F=(f_{1}\otimes\dots\otimes f_{q})_{\text{sym}}$
and $q$ even, 
\[
N^{q/2}\E(F(Z_{0:T}^{1},\dots,Z_{0:T}^{q}))\convN\sum_{J\in\mathcal{I}_{q}}\prod_{\{a,b\}\in J}V_{0:T}(f_{a},f_{b})\,.
\]
 \end{corollary}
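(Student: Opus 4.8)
The plan is to derive the Wick formula from Proposition~\ref{Prop:convq2}, point~\ref{point:prop-03}, by decomposing its limit into a product over the $q/2$ loop-pairs and then re-summing. First, by \eqref{eq:defodot} one has $N^{q/2}\E(F(Z^1_{0:T},\dots,Z^q_{0:T}))=N^{q/2}\E((\eta^N_{0:T})^{\odot q}(F))$, and since $F=(f_1\otimes\dots\otimes f_q)_{\mathrm{sym}}$ is symmetric, bounded, measurable and lies in $\Bsym(q)$, Proposition~\ref{Prop:convq2} gives convergence to the right-hand side $L$ of \eqref{eq:conWick01}. Writing $W=\prod_{i=1}^{q/2}\int_0^T\Lambda\widetilde K^{2i-1}_s\widetilde K^{2i}_s\,ds$ for the weight occurring there, I would first absorb the term $-F(\widetilde{\widetilde Z}^1_{0:T},\dots,\widetilde{\widetilde Z}^q_{0:T})$ into the sum: since $\sum_{k=0}^{q/2}\binom{q/2}{k}(-1)^{q/2-k}=0$ it equals the $k=0$ term, so that
\[
L=I_q\sum_{k=0}^{q/2}\binom{q/2}{k}(-1)^{q/2-k}\,\E\!\left[\E_{\widetilde{\mathcal{K}}_T}\!\left(F(\widetilde Z^1_{0:T},\dots,\widetilde Z^{2k}_{0:T},\widetilde{\widetilde Z}^{2k+1}_{0:T},\dots,\widetilde{\widetilde Z}^{q}_{0:T})\,\big|\,\widetilde L_{1,q}\right)W\right],
\]
with the convention that at $k=0$ all particles are of $\widetilde{\widetilde Z}$-type and at $k=q/2$ all are of $\widetilde Z$-type.

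Next I would factorize the $k$-th summand over the pairs $\{2i-1,2i\}$, $1\le i\le q/2$. On the event $\widetilde L_{1,q}$ every loop of the $\widetilde Z$-system joins tree $2i-1$ to tree $2i$ for some $i$, there is no other loop, and every branch-addition stays inside a single tree, so the interaction graph of $(\widetilde Z^1_{0:T},\dots,\widetilde Z^q_{0:T})$ splits into $q/2$ disjoint two-tree blocks. Using points~\ref{lemKtilde:01} and \ref{lemKtilde:05} of Lemma~\ref{Lem:law-K-tilde} (the $\widetilde K^j$ are independent Yule processes, and given $\widetilde{\mathcal{K}}_T$ the loop processes $\widetilde L^{2i-1,2i}$ are independent inhomogeneous Poisson processes) together with the couplings defining $\check{\widetilde Z}$ and $\widetilde{\widetilde Z}$, one checks that, conditionally on $\widetilde{\mathcal{K}}_T$ and $\widetilde L_{1,q}$, the pairs $(\widetilde Z^{2i-1}_{0:T},\widetilde Z^{2i}_{0:T})$ are independent, the $i$-th having the conditional law of $(\check{\widetilde Z}^1_{0:T},\check{\widetilde Z}^2_{0:T})$ given $\widetilde L_{1,2}$ and $(\widetilde K^1,\widetilde K^2)=(\widetilde K^{2i-1},\widetilde K^{2i})$, whereas $(\widetilde{\widetilde Z}^{2k+1}_{0:T},\dots,\widetilde{\widetilde Z}^q_{0:T})$ is, by Theorem~\ref{rem:thgm}, $\widetilde P_{0:T}^{\otimes(q-2k)}$-distributed with the analogous pairwise structure. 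As $W$ factorizes over $i$ and the couples $(\widetilde K^{2i-1},\widetilde K^{2i})$ are i.i.d., expanding $F=\frac1{q!}\sum_{\sigma\in\mathcal S_q}f_{\sigma(1)}\otimes\dots\otimes f_{\sigma(q)}$ and taking expectations gives, for each $\sigma$,
\[
\E\!\left[\E_{\widetilde{\mathcal{K}}_T}\!\left(\prod_{j\le 2k}f_{\sigma(j)}(\widetilde Z^j_{0:T})\prod_{j>2k}f_{\sigma(j)}(\widetilde{\widetilde Z}^j_{0:T})\,\Big|\,\widetilde L_{1,q}\right)W\right]=\prod_{i=1}^{k}W^{\mathrm{int}}(f_{\sigma(2i-1)},f_{\sigma(2i)})\prod_{i=k+1}^{q/2}W^{\mathrm{free}}(f_{\sigma(2i-1)},f_{\sigma(2i)}),
\]
where $W^{\mathrm{int}}(f,g)=\E\big(\E_{\widetilde K^1_{0:T},\widetilde K^2_{0:T}}(f(\check{\widetilde Z}^1_{0:T})g(\check{\widetilde Z}^2_{0:T})\mid\widetilde L_{1,2})\int_0^T\Lambda\widetilde K^1_s\widetilde K^2_s\,ds\big)$ and $W^{\mathrm{free}}(f,g)$ is the same expression with $(\check{\widetilde Z}^1,\check{\widetilde Z}^2)$ replaced by $(\widetilde{\widetilde Z}^1,\widetilde{\widetilde Z}^2)$, so that $V_{0:T}(f,g)=W^{\mathrm{int}}(f,g)-W^{\mathrm{free}}(f,g)$. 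Finiteness of all these expectations and the legitimacy of letting $N\to\infty$ come, as in Lemma~\ref{Lem:qlim}, from the moment bound \eqref{Eq:majK}.

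Finally comes the bookkeeping. Since $W^{\mathrm{int}}$ and $W^{\mathrm{free}}$ are symmetric in their two arguments, the last right-hand side depends on $\sigma$ only through the unordered pair-partition $J\in\mathcal I_q$ it induces and through which $k$-subset $S\subset J$ is sent to the first $k$ slots, and exactly $2^{q/2}k!(q/2-k)!$ permutations produce a given $(J,S)$; summing over $\sigma$, dividing by $q!$ and using $I_q=q!/(2^{q/2}(q/2)!)$ gives
\[
\E\!\left[\E_{\widetilde{\mathcal{K}}_T}\!\left(F(\widetilde Z^1_{0:T},\dots,\widetilde Z^{2k}_{0:T},\widetilde{\widetilde Z}^{2k+1}_{0:T},\dots,\widetilde{\widetilde Z}^q_{0:T})\,\big|\,\widetilde L_{1,q}\right)W\right]=\frac{1}{I_q\binom{q/2}{k}}\sum_{J\in\mathcal I_q}\ \sum_{\substack{S\subset J\\ \#S=k}}\ \prod_{\{a,b\}\in S}W^{\mathrm{int}}(f_a,f_b)\prod_{\{a,b\}\in J\setminus S}W^{\mathrm{free}}(f_a,f_b).
\]
Plugging into $L$, the factors $I_q$ and $\binom{q/2}{k}$ cancel, and exchanging the sum over $k$ with the sum over $(J,S)$ yields
\[
L=\sum_{J\in\mathcal I_q}\sum_{S\subset J}(-1)^{\#(J\setminus S)}\prod_{\{a,b\}\in S}W^{\mathrm{int}}(f_a,f_b)\prod_{\{a,b\}\in J\setminus S}W^{\mathrm{free}}(f_a,f_b)=\sum_{J\in\mathcal I_q}\prod_{\{a,b\}\in J}\big(W^{\mathrm{int}}(f_a,f_b)-W^{\mathrm{free}}(f_a,f_b)\big)=\sum_{J\in\mathcal I_q}\prod_{\{a,b\}\in J}V_{0:T}(f_a,f_b),
\]
which is the claimed Wick formula. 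I expect the main obstacle to be the block-factorization of the $k$-th summand: one must verify carefully that conditioning on $\widetilde L_{1,q}$ really does decouple the $q/2$ pairs and that, pair by pair, the coupling defining $\check{\widetilde Z}$ (resp.\ $\widetilde{\widetilde Z}$) agrees with the restriction to two trees of the coupling used for $(\widetilde Z^1_{0:T},\dots,\widetilde Z^q_{0:T})$; after that, only the binomial identity and elementary counting remain.
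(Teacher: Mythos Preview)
Your proof is correct and follows essentially the same route as the paper's. Both start from Proposition~\ref{Prop:convq2}, use the block factorization on $\widetilde L_{1,q}$ via Lemma~\ref{Lem:law-K-tilde} (points~\ref{lemKtilde:01} and~\ref{lemKtilde:05}) to split the expectation into a product over the $q/2$ pairs, and then recover $V_{0:T}$ as a difference through the binomial identity $\sum_{k}\binom{q/2}{k}(-1)^{q/2-k}(\cdot)=\prod(\cdot-\cdot)$. The only cosmetic difference is the order of operations: the paper first uses pair-preserving permutations to convert $C_{q/2}^k$ into a sum over subsets $I\subset[q/2]$ and expands $\prod_i(f_{\sigma(2i-1)}(\widetilde Z)f_{\sigma(2i)}(\widetilde Z)-f_{\sigma(2i-1)}(\widetilde{\widetilde Z})f_{\sigma(2i)}(\widetilde{\widetilde Z}))$ \emph{inside} the expectation before factorizing, whereas you first factorize each $k$-term into $\prod W^{\mathrm{int}}\prod W^{\mathrm{free}}$, do the permutation count to obtain the $(J,S)$-sum, and only then apply the binomial expansion; your bookkeeping with $(J,S)$ and the count $2^{q/2}k!(q/2-k)!$ is the same identity $I_q\binom{q/2}{k}\cdot\frac{2^{q/2}k!(q/2-k)!}{q!}=1$ that the paper uses implicitly.
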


The name ``Wick formula'' comes from the Wick formula on the expectation
of a product of Gaussians. In this formula, there is a sum over pairings,
just as in the above Corollary. See Theorem 22.3, p. 360 in \cite{nica-speicher-2006}
for the Wick formula.

\begin{proof}To shorten the notations, we will write: 
\[
\prod_{i=1}^{q/2}\int_{0}^{T}\Lambda\widetilde{K}_{s}^{2i-1}\widetilde{K}_{s}^{2i}ds=p\,.
\]
 With this particular form for $F$, the limit in (\ref{eq:conWick01})
of Proposition \ref{Prop:convq2} becomes
\begin{multline*}
\frac{1}{q!}\sum_{\sigma\in\mathcal{S}_{q}}\sum_{k=1}^{q/2}J_{q}\left(\begin{array}{c}
q/2\\
k
\end{array}\right)(-1)^{\frac{q}{2}-k}\\
\times\E(\E_{\widetilde{\mathcal{K}}_{T}}(f_{\sigma(1)}(\widetilde{Z}_{0:T}^{1})\dots f_{\sigma(2k)}(\widetilde{Z}_{0:T}^{2k})f_{\sigma(2k+1)}(\widetilde{\widetilde{Z}}_{0:T}^{2k+1})\dots f_{\sigma(q)}(\widetilde{\widetilde{Z}}_{0:T}^{q})\\
-f_{\sigma(1)}(\widetilde{\widetilde{Z}}_{0:T}^{1})\dots f_{\sigma(q)}(\widetilde{\widetilde{Z}}_{0:T}^{q})|\widetilde{L}_{1,q})p)\\
=\frac{1}{q!}\sum_{\sigma\in\mathcal{S}_{q}}\sum_{k=0}^{q/2}J_{q}\left(\begin{array}{c}
q/2\\
k
\end{array}\right)(-1)^{\frac{q}{2}-k}\\
\times\E(\E_{\widetilde{\mathcal{K}}_{T}}(f_{\sigma(1)}(\widetilde{Z}_{0:T}^{1})\dots f_{\sigma(2k)}(\widetilde{Z}_{0:T}^{2k})f_{\sigma(2k+1)}(\widetilde{\widetilde{Z}}_{0:T}^{2k+1})\dots f_{\sigma(q)}(\widetilde{\widetilde{Z}}_{0:T}^{q})|\widetilde{L}_{1,q})p)\,,
\end{multline*}
because $\sum_{k=1}^{q/2}\left(\begin{array}{c}
q/2\\
k
\end{array}\right)(-1)^{\frac{q}{2}-k}=(-1)^{\frac{q}{2}+1}$. Using the exchangeability property of the particle systems, we
can transform the last expression into 
\begin{multline}
\frac{1}{q!}\sum_{\sigma\in\mathcal{S}_{q}}\sum_{k=0}^{q/2}J_{q}(-1)^{\frac{q}{2}-k}\sum_{I\subset[q/2],\#I=k}\E(\E_{\widetilde{\mathcal{K}}_{T}}(\prod_{i\in I}f_{\sigma(2i-1)}(\widetilde{Z}_{0:T}^{2i-1})f_{\sigma(2i)}(\widetilde{Z}_{0:T}^{2i})\\
\prod_{i\notin I}f_{\sigma(2i-1)}(\widetilde{\widetilde{Z}}_{0:T}^{2i-1})f_{\sigma(2i)}(\widetilde{\widetilde{Z}}_{0:T}^{2i})|\widetilde{L}_{1,q})p)=\\
\frac{1}{q!}\sum_{\sigma\in\mathcal{S}_{q}}J_{q}\E(\E_{\widetilde{\mathcal{K}}_{T}}(\prod_{i=1}^{q/2}(f_{\sigma(2i-1)}(\widetilde{Z}_{0:T}^{2i-1})f_{\sigma(2i)}(\widetilde{Z}_{0:T}^{2i})-f_{\sigma(2i-1)}(\widetilde{\widetilde{Z}}_{0:T}^{2i-1})f_{\sigma(2i)}(\widetilde{\widetilde{Z}}_{0:T}^{2i}))|\widetilde{L}_{1,q})p)=\\
\frac{1}{q!}\sum_{\sigma\in\mathcal{S}_{q}}J_{q}\E(\E_{\widetilde{\mathcal{K}}_{T}}(\prod_{i=1}^{q/2}(f_{\sigma(2i-1)}(\check{Z}_{0:T}^{2i-1})f_{\sigma(2i)}(\check{Z}_{0:T}^{2i})-f_{\sigma(2i-1)}(\widetilde{\widetilde{Z}}_{0:T}^{2i-1})f_{\sigma(2i)}(\widetilde{\widetilde{Z}}_{0:T}^{2i}))\\
|\widetilde{L}_{1,2},\dots,\widetilde{L}_{q-1,q})p)\,,\label{eq:last-above}
\end{multline}
the last equality being true because 
\[
\mathcal{L}((\widetilde{Z}_{0:T}^{i})_{i\geq1},(\widetilde{\widetilde{Z}}_{0:T}^{i})_{i\geq1}|\widetilde{\mathcal{K}}_{T},\widetilde{L}_{1,q})=\mathcal{L}((\check{Z}_{0:T}^{i})_{i\geq1},(\widetilde{\widetilde{Z}}_{0:T}^{i})_{i\geq1}|\widetilde{\mathcal{K}}_{T},\widetilde{L}_{1,2},\dots,\widetilde{L}_{q-1,q}).
\]
 By Lemma \ref{Lem:law-K-tilde}, \ref{lemKtilde:05}, the processes
$\widetilde{L}^{\{1,2\}},\dots,\widetilde{L}^{\{q,q-1\}}$, defined
in (\ref{eq:def-L-tilde-(j-1,j)}), are independent, conditionally
to $\widetilde{\mathcal{K}}_{T}$. And for all $i\in[q/2]$, 
\begin{multline*}
\E_{\widetilde{\mathcal{K}}_{T}}(f(\check{Z}_{0:T}^{2i-1})f_{\sigma(2i)}(\check{Z}_{0:T}^{2i})-f_{\sigma(2i-1)}(\widetilde{\widetilde{Z}}_{0:T}^{2i-1})f_{\sigma(2i)}(\widetilde{\widetilde{Z}}_{0:T}^{2i})|\widetilde{L}_{1,2},\dots,\widetilde{L}_{q-1,q})\\
=\E(f(\check{Z}_{0:T}^{2i-1})f_{\sigma(2i)}(\check{Z}_{0:T}^{2i})-f_{\sigma(2i-1)}(\widetilde{\widetilde{Z}}_{0:T}^{2i-1})f_{\sigma(2i)}(\widetilde{\widetilde{Z}}_{0:T}^{2i})|\widetilde{L}_{2i-1,2i},\widetilde{K}_{0:T}^{2i-1},\widetilde{K}_{0:T}^{2i})\,.
\end{multline*}
 So, the quantity in (\ref{eq:last-above}) is equal to 
\begin{multline*}
\frac{1}{q!}\sum_{\sigma\in\mathcal{S}_{q}}J_{q}\E(\prod_{i=1}^{q/2}[\E((f_{\sigma(2i-1)}(\check{Z}_{0:T}^{2i-1})f_{\sigma(2i)}(\check{Z}_{0:T}^{2i})\\
-f_{\sigma(2i-1)}(\widetilde{\widetilde{Z}}_{0:T}^{2i-1})f_{\sigma(2i)}(\widetilde{\widetilde{Z}}_{0:T}^{2i}))|\widetilde{L}_{2i-1,2i},\widetilde{K}_{0:T}^{2i-1},\widetilde{K}_{0:T}^{2i})\times\int_{0}^{T}\Lambda\widetilde{K}_{s}^{2i-1}\widetilde{K}_{s}^{2i-1}ds)])=\\
\mbox{(by Lemma \ref{Lem:law-K-tilde}, 4)}\\
\frac{1}{q!}\sum_{\sigma\in\mathcal{S}_{q}}J_{q}\prod_{i=1}^{q/2}\E(\E((f_{\sigma(2i-1)}(\check{Z}_{0:T}^{2i-1})f_{\sigma(2i)}(\check{Z}_{0:T}^{2i})\\
-f_{\sigma(2i-1)}(\widetilde{\widetilde{Z}}_{0:T}^{2i-1})f_{\sigma(2i)}(\widetilde{\widetilde{Z}}_{0:T}^{2i}))|\widetilde{L}_{2i-1,2i},\widetilde{K}_{0:T}^{2i-1},\widetilde{K}_{0:T}^{2i})\int_{0}^{T}\Lambda\widetilde{K}_{s}^{2i-1}\widetilde{K}_{s}^{2i-1}ds)=\\
\sum_{J\in\mathcal{I}_{q}}\prod_{\{a,b\}\in J}V_{0:T}(f_{a},f_{b})\,.
\end{multline*}

\end{proof}

\section{Proof of convergence theorems\label{Sec:conv}}

\subsection{Proof of Theorem \ref{Th:conv-ps-debut} (almost sure convergence)\label{sub:Almost-sure-convergence}}

\begin{proof}

We recall the notations of \cite{del-moral-patras-rubenthaler-2008}.
For any empirical measure $m(x)=\frac{1}{N}\sum_{i=1}^{N}\delta_{x^{i}}$
(based on $N$ points $x^{1},x^{2},\dots,x^{N}$), 

\[
m(x)^{\otimes q}:=\frac{1}{N^{q}}\sum_{a\in[N]^{[q]}}\delta_{(x^{a(1)},\dotsm,x^{a(q)})}\ ,
\]
 where $[N]^{[q]}=\left\{ a:[q]\rightarrow[N]\right\} $. Note that
for any $F$,
\[
m(x)^{\otimes q}(F)=m(x)^{\otimes q}(F_{\text{sym}})\ .
\]
 We define, for all $p\in[q]$, $[q]_{p}^{[q]}:=\{a\in[q]^{[q]},\#\text{Im}(a)=p\}$,
and ($\forall k\leq q$),

\[
\partial^{k}L_{q}=\sum_{q-k\leq p\leq q}s(p,q-k)\frac{1}{(q)_{p}}\sum_{a\in[q]_{p}^{[q]}}a\,,
\]
 (the $s(.,.)$ are the Stirling numbers of the first kind), and for
all $F$ (of $q$ variables), for all $b\in[q]^{[q]}$,
\[
D_{b}(F)(x^{1},\dots,x^{q})=F(x^{b(1)},\dots,x^{b(q)})\ ,
\]
 
\[
D_{\partial^{k}L_{q}}(F)=\sum_{q-k\leq p\leq q}s(p,q-k)\frac{1}{(q)_{p}}\sum_{a\in[q]_{p}^{[q]}}D_{a}(F)\ .
\]
 The derivative-like notation $\partial^{k}L$ comes from \cite{del-moral-patras-rubenthaler-2008},
where it makes sense to think of a derivative at this point. We keep
the same notation in order to be consistent, but it has no particular
meaning in our setting. We then have, by Corollary 2.3 p. 789 of \cite{del-moral-patras-rubenthaler-2008},
for any empirical measure $m(x)$ (based on $N$ points) and for any
$F$ of $q$ variables,

\[
m(x)^{\otimes q}(F)=m(x)^{\odot q}\left(\sum_{0\leq k<q}\frac{1}{N^{k}}D_{\partial^{k}L_{q}}(F)\right)\ .
\]
 Suppose $F\in\mathcal{B}_{0}^{sym}(q)$, we then obtain

\begin{equation}
\E\left((\eta_{0:T}^{N})^{\otimes q}(F)\right)=\sum_{0\leq k<q}\frac{1}{N^{k}}\sum_{q-k\leq p\leq q}s(p,q-k)\sum_{a\in[q]_{p}^{[q]}}\E\left((\eta_{0:T}^{N})^{\odot q}(D_{a}(F))\right)\ .\label{eq:croix-point}
\end{equation}
 We take $a\in[q]^{[q]}$ with $p=\#\mbox{Im}(a)\geq q-k$ and $k<q/2$.
Note that $\#\{i\in[q],\#a^{-1}(\{i\})=1\}\geq q-2k>0$.

We have now to use the Hoeffding's decomposition (see \cite{delapena-gine-1999,lee-1990},
or \cite{del-moral-patras-rubenthaler-2009}, Section 4, for the details).
For any symmetrical $G:\mathbb{D}([0,T],(\R^{d})^{q})\rightarrow\R$,
we define 
\[
\theta=\int G(x_{1},\dots x_{q})\widetilde{P}_{0:T}(dx_{1},\dots,dx_{q})\,,
\]
 
\[
G^{(j)}(x_{1},\dots,x_{j})=\int G(x_{1},\dots,x_{q})\widetilde{P}_{0:T}^{\otimes(q-j)}(dx_{j+1},\dots,dx_{q})\,,
\]
 and recursively
\[
h^{(1)}(x_{1})=G^{(1)}(x_{1})-\theta\,,
\]
 
\[
h^{(k)}(x_{1},\dots,x_{k})=G^{(j)}(x_{1},\dots,x_{k})-\sum_{i=1}^{j-1}\sum_{(j,i)}h^{(i)}-\theta\,,
\]
 where $\sum_{(j,i)}h^{(i)}$ is an abbreviation for the function
\[
(x_{1},\dots,x_{j})\mapsto\sum_{1\leq r_{1}<\dots<r_{i}\leq j}h^{(i)}(x_{r_{1}},\dots,x_{r_{i}})\,.
\]
 For all $j$, $h^{(j)}$ is in $\mathcal{B}_{0}^{sym}(j)$. We have
the formula 
\[
G(x_{1},\dots,x_{q})=h^{(q)}(x_{1},\dots,x_{q})+\sum_{j=1}^{q-1}\sum_{(q,j)}h^{(j)}\,.
\]

We take now $G(x_{1},\dots,x_{q})=D_{a}(F)$ (still with $F\in\mathcal{B}_{0}^{sym}(q)$).
For $j<q-2k$, $G^{(j)}=0$. So we can show by recurrence that $h^{(j)}=0$
for $j<q-2k$. So
\[
G(x_{1},\dots,x_{q})=h^{(q)}(x_{1},\dots,x_{q})+\sum_{j=q-2k}^{q-1}\sum_{(q,j)}h^{(j)}\,.
\]
 So, by Corollary \ref{cor:terme-k-borne}, we have for some constant
$C$ 
\[
\E(D_{a}F(Z_{0:T}^{1},\dots,Z_{0:T}^{q}))\leq\frac{C}{N^{(q-2k)/2}}\,.
\]
 And so, by (\ref{eq:croix-point}),
\[
\E\left((\eta_{0:T}^{N})^{\otimes q}(F)\right)\leq\frac{C}{N^{\frac{q}{2}}}\ .
\]
 Suppose that we take a bounded function $f:\mathbb{D}([0,T],\R^{d})\rightarrow\R$.
We set $\bar{f}=f-\widetilde{P}_{0:T}(f)$. We then have (with the
notation $\bar{f}^{\otimes q}(x^{1},\dots,x^{q}):=\bar{f}(x^{1})\times\dots\times\bar{f}(x^{q})$)
\begin{eqnarray}
\E(((\eta_{0:T}^{N}(f)-\widetilde{P}_{0:T}(f))^{q}) & = & \E((\eta_{0:T}^{N}(\bar{f}))^{q})\nonumber \\
 & = & \E((\eta_{0:T}^{N})^{\otimes q}(\bar{f}^{\otimes q}))\nonumber \\
 & = & \E((\eta_{0:T}^{N})^{\otimes q}(\overline{f}^{\otimes q})_{\mbox{sym }})\nonumber \\
 & \leq & \frac{C}{N^{\frac{q}{2}}}\ .\label{eq:conv-Lq}
\end{eqnarray}
 Provided we take $q=4$, we can apply Borel-Cantelli Lemma to finish
the proof.

\end{proof}

\subsection{Proof of Theorem \ref{Th:TCL-debut} (central-limit theorem)\label{sub:Central-limit-theorem}}

\begin{proof} To simplify, we suppose here that $\Vert f_{1}\Vert_{\infty}\leq1$,
\ldots{}, $\Vert f_{q}\Vert_{\infty}\leq1$. For any $u_{1},\dots,u_{q}\in\R$,
we have:

\begin{eqnarray}
 &  & \E\left(\exp\left(N\eta_{0:T}^{N}\left(\log\left(1+\frac{iu_{1}f_{1}+\dots+iu_{q}f_{q}}{\sqrt{N}}\right)\right)\right)\right)\label{eq:debut-du-calcul}\\
 &  & ~~~=\E\left(\prod_{j=1}^{N}\left(1+\frac{iu_{1}f_{1}(Z_{0:T}^{j})+\dots+iu_{q}f_{q}(Z_{0:T}^{j})}{\sqrt{N}}\right)\right)\nonumber \\
 &  & ~~~=\E(\sum_{0\leq k\leq N}\frac{1}{N^{k/2}}\sum_{1\leq j_{1},\dots,j_{k}\leq q}i^{k}u_{j_{1}}\dots u_{j_{k}}\times\sum_{1\leq i_{1}<\dots<i_{k}\leq N}f_{j_{1}}(Z_{0:T}^{i_{1}})\dots f_{j_{k}}(Z_{0:T}^{i_{k}}))\nonumber \\
 &  & ~~~=\sum_{0\leq k\leq N}\frac{(N)_{k}}{N^{k/2}}\sum_{1\leq j_{1},\dots,j_{k}\leq q}i^{k}u_{j_{1}}\dots u_{j_{k}}\frac{1}{k!}\E\left((\eta_{0:T}^{N})^{\odot k}(f_{j_{1}}\otimes\dots\otimes f_{j_{k}})\right)\nonumber \\
 &  & ~~~=\sum_{0\leq k\leq N}\frac{(N)_{k}}{N^{k/2}}\sum_{1\leq j_{1},\dots,j_{k}\leq q}i^{k}u_{j_{1}}\dots u_{j_{k}}\frac{1}{k!}\E\left((\eta_{0:T}^{N})^{\odot k}(f_{j_{1}}\otimes\dots\otimes f_{j_{k}})_{sym}\right)\,.\nonumber 
\end{eqnarray}
By Corollary \ref{cor:terme-k-borne}, for all $k\in[N]$, we have
(computing very roughly)
\begin{multline*}
\left|\frac{(N)_{k}}{N^{k/2}}\sum_{1\leq j_{1},\dots,j_{k}\leq q}i^{k}u_{j_{1}}\dots u_{j_{k}}\frac{1}{k!}\E\left((\eta_{0:T}^{N})^{\odot k}(f_{j_{1}}\otimes\dots\otimes f_{j_{k}})_{sym}\right)\right|\\
\leq\frac{q^{k}(\max(|u_{1}|,\dots,|u_{q}|))^{k}}{k!}\times\frac{2^{2k+1}(3k)!(\Lambda T\vee1)^{k+1}}{(k-1)!k!(e^{-\Lambda T})^{2k+1}(1-e^{-\Lambda T})}\\
\times\left(\frac{1}{\left(\left\lceil \frac{k}{4}\right\rceil \right)!}+\frac{1}{(N-1)^{\frac{k}{4}}}\right)\frac{(N)_{k}}{(N-1)^{k/2}N^{k/2}}\\
\leq\frac{q^{k}2^{2k+1}3^{3k}(3k)(\max(|u_{1}|,\dots,|u_{q}|))^{k}(\Lambda T\vee1)^{k+1}}{(e^{-\Lambda T})^{2k+1}(1-e^{-\Lambda T})}\left(\frac{1}{\left(\left\lceil \frac{k}{4}\right\rceil \right)!}+\frac{1}{(N-1)^{\frac{k}{4}}}\right)\,,
\end{multline*}
and this last term is summable in $k$ if $N$ is big enough. Using
Corollary \ref{Cor:wickproduct} and Proposition \ref{Prop:convq2},
we then obtain: 
\begin{multline}
\E\left(\exp\left(N\eta_{0:T}^{N}\left(\log\left(1+\frac{iu_{1}f_{1}+\dots iu_{q}f_{q}}{\sqrt{N}}\right)\right)\right)\right)\\
\underset{N\rightarrow+\infty}{\longrightarrow}\sum_{k\geq0,k\text{ even }}(-1)^{k/2}\sum_{1\leq j_{1},\dots,j_{k}\leq q}\frac{u_{j_{1}}\dots u_{j_{k}}}{k!}\sum\limits _{I_{k}\in{\cal I}_{k}}\prod\limits _{\{a,b\}\in I_{k}}V_{0:T}(f_{j_{a}},f_{j_{b}})\\
=\sum_{k\geq0,k\text{ even }}\frac{(-1)^{k/2}}{2^{k/2}(k/2)!}\times\sum_{1\leq j_{1},\dots,j_{k}\leq q}u_{j_{1}}\dots u_{j_{k}}V_{0:T}^ {}(f_{j_{1}},f_{j_{2}})\dots V_{0:T}(f_{j_{k-1}},f_{j_{k}})\\
=\sum_{k\geq0,k\text{ even }}\frac{(-1)^{k/2}}{2^{k/2}(k/2)!}\left(\sum_{1\leq j_{1},j_{2}\leq q}u_{j_{1}}u_{j_{2}}V_{0:T}^ {}(f_{j_{1}},f_{j_{2}})\right)^{k/2}\\
=\exp\left(-\frac{1}{2}\sum_{1\leq j_{1},j_{2}\leq q}u_{j_{1}}u_{j_{2}}V_{0:T}^ {}(f_{j_{1}},f_{j_{2}})\right)\label{eq:calcul-01}
\end{multline}
We can also write a series development of the $\log$ in (\ref{eq:debut-du-calcul})
and obtain:
\begin{multline}
\E\left(\exp\left(N\eta_{0:T}^{N}(\log(1+\frac{iu_{1}f_{1}+\dots+iu_{q}f_{q}}{\sqrt{N}}))\right)\right)\\
=\E\left(\exp\left(\sum_{k\geq1}\frac{(-1)^{k+1}}{k}N^{1-k/2}\eta_{0:T}^{N}((iu_{1}f_{1}+\dots+iu_{q}f_{q})^{k})\right)\right)\,.\label{eq:calcul-02}
\end{multline}
 We have, for all $k$, $\Vert\frac{(iu_{1}f_{1}+\dots+iu_{q}f_{q})^{k}}{N^{k/2}}\Vert_{\infty}\leq\frac{(|u_{1}|+\dots+|u_{q}|)^{k}}{N^{k/2}}$
for some constant $C$ (independent of $k$). So, the remaining term
in the series development of the log can be bounded by
\[
\Vert\sum_{k\geq3}\frac{(-1)^{k+1}}{k}N^{1-k/2}(iu_{1}f_{1}+\dots+iu_{q}f_{q})^{k}\Vert_{\infty}\leq\frac{C(|u_{1}|+\dots+|u_{q}|)^{4}}{N}\,,
\]
 for some constant $C$, if $N\geq2(|u_{1}|+\dots+|u_{q}|)^{2}$.
So, the limit when $N\rightarrow+\infty$ of (\ref{eq:calcul-02})
is the same as the limit of 
\[
\E\left(\exp\left(\sqrt{N}(iu_{1}\eta_{0:T}^{N}(f_{1})+\dots+iu_{q}\eta_{0:T}^{N}(f_{q})\right)\exp\left(\frac{1}{2}\eta_{0:T}^{N}((u_{1}f_{1}+\dots+u_{q}f_{q})^{2})\right)\right)\,.
\]
 We have, for some constant $C$ and $f:=iu_{1}f_{1}+\dots+iu_{q}f_{q}$
(recall $x\in\R\Rightarrow|e^{ix}|=1$),
\begin{equation}
\vert\E(e^{\sqrt{N}\eta_{0:T}^{N}(f)}e^{-\frac{1}{2}\eta_{0:T}^{N}(f^{2})})-\E(e^{\sqrt{N}\eta_{0:T}^{N}(f)}e^{-\frac{1}{2}\widetilde{P}_{0:T}(f^{2})})\vert\leq C\E(\left|\widetilde{P}_{0:T}(f^{2})-\eta_{0:T}^{N}(f^{2})\right|)\label{eq:maj-lim}
\end{equation}
So, by Theorem \ref{Th:conv-ps-debut}, the left-hand side of (\ref{eq:maj-lim})
goes to $0$ as $N\rightarrow+\infty$. So
\begin{multline*}
\lim_{N\rightarrow0}\exp\left(\sqrt{N}\eta_{0:T}^{N}\left(\log(1+\frac{iu_{1}f_{1}+\dots+iu_{q}f_{q}}{\sqrt{N}})\right)\right)\\
=\lim_{N\rightarrow0}\E\left(e^{\sqrt{N}\eta_{0:T}^{N}(iu_{1}f_{1}+\dots+iu_{q}f_{q})}e^{\frac{1}{2}\widetilde{P}_{0:T}((u_{1}f_{1}+\dots+iu_{q}f_{1})^{2})}\right)\,,
\end{multline*}
 (meaning that if these limits exist, they are equal), which concludes
the proof with, $\forall i,j$,
\begin{equation}
K(i,j)=\widetilde{P}_{0:T}(f_{i}f_{j})+V_{0:T}(f_{i},f_{j})\,.\label{eq:def-K}
\end{equation}

\end{proof}

Note that we can bound the two terms of rhs above. Take $f_{1},\dots,f_{q}$
as above and such that $\Vert f_{1}\Vert_{\infty}\leq1$, \ldots{},
$\Vert f_{q}\Vert_{\infty}\leq1$$ $. For all $i,j$:
\[
|\widetilde{P}_{0:T}(f_{i}f_{j})|\leq1\,,
\]
 
\begin{eqnarray*}
|V_{0:T}(f_{i},f_{j})| & \leq & 2\E(T\Lambda\widetilde{K}_{T}^{1}\widetilde{K}_{T}^{2})\\
\mbox{ (by Lemma \ref{lem:maj-E-K-tilde}) } & \leq & \frac{2T\Lambda e^{2T\Lambda}(q+1)q}{(1-e^{\Lambda T})^{2}}\,.
\end{eqnarray*}

\subsection{Proof of Corollary \ref{cor:TCL}}

\begin{proof}The result is a consequence of Theorem \ref{Th:TCL-debut}
from this paper and of Theorem 4.1 from \cite{del-moral-patras-rubenthaler-2009}.
We only have to prove that for all $j\geq2$, $f\in\mathcal{B}_{0}^{sym}(j)$,
\[
\E\left(\left((\eta_{0:T}^{N})^{\odot j}(f)\right)^{2}\right)\leq\frac{C}{N^{j}}\,,
\]
for some constant $C$ which may depend on $j$, $f$, $T$. Looking
at the proof of Lemma 4.3 of \cite{del-moral-patras-rubenthaler-2009},
we see that we need only to prove that for all $k\in\{j+1,\dots,2j\}$,
$r\in[k]$, for all $F:\mathbb{D}([0,T],\R^{d})^{k}\rightarrow\R$
bounded measurable, symmetric in the $k-r$ last variables and such
that $\int_{\mathbb{D}([0,T],E)}F(z_{1},\dots,z_{k})\widetilde{P}_{0:T}(dx_{i})=0$,
for all $i\in\{r+1,\dots,k\}$, we have
\[
\left|\E((\eta_{0:T}^{N})^{\odot k}(F))\right|\leq\frac{C}{N^{\frac{k-r}{2}}}\,,
\]
for some constant $C$ depending on $F$, $k$, $r$. The proof of
this inequality follows the outline of the proof of Proposition \ref{Prop:convq2}.
Here, we write only the beginning of the decomposition. We have (for
$F$, $k$, $r$ as above)
\begin{multline*}
\E((\eta_{0:T}^{N})^{\odot k}(F))=\E(F(Z_{0:T}^{1},\dots,Z_{0:T}^{k}))\\
=\E(F(Z_{0:T}^{1},\dots,Z_{0:T}^{k})\1_{\left(A_{r+1}\cap\dots\cap A_{k}\right)^{c}})+\E(F(Z_{0:T}^{1},\dots,Z_{0:T}^{k})\1_{A_{r+1}\cap\dots\cap A_{k}})\\
=\E(F(Z_{0:T}^{1},\dots,Z_{0:T}^{k})\1_{\left(A_{r+1}\cap\dots\cap A_{k}\right)^{c}})-\E(F(Z_{0:T}^{1},\dots,Z_{0:T}^{r},\widetilde{\widetilde{Z}}_{0:T}^{r+1},\dots,\widetilde{\widetilde{Z}}_{0:T}^{k})\1_{\left(A_{r+1}\cap\dots\cap A_{k}\right)^{c}})\,.
\end{multline*}

\end{proof}

\section{Appendix}

\subsection{Proof of Lemma\label{sub:Proof-of-Lemma} \ref{Lem:equlaw1} }

\begin{proof}

Let us here give a brief explanation of why this equality is true.
We start the construction of the link times at time $0$. We first
look at the times $\{T_{k},k\geq1\}\cap\{T'_{l},l\geq1\}$. The law
of $\tau=\inf\{T_{k},k\geq1\}\cap\{T'_{l},l\geq1\}$ is $\mathcal{E}(\frac{\Lambda q(N-q)_{+}}{N-1})$
. At time $\tau$, we choose $r(k)$ in $C_{\tau-}^{1}\cup\dots\cup C_{\tau-}^{q}$
and $j(k)\in[N]\backslash C_{\tau-}^{1}\cup\dots\cup C_{\tau-}^{q}$
and the jump $C_{\tau}^{r(k)}=C_{\tau-}^{r(k)}\cup\{j(k)\}$is performed.
For example, in Figure \ref{fig:01}, we wait $3T/4$ and then we
add $3$ to the set $C_{(3T/4)-}^{2}=\{2\}$.

The situation in Definition \ref{Def:system} is the following. We
have Poisson processes $N_{i,j}$ like in Subsection \ref{sec:Definition-and-main}.
Let us start at the bottom of the interaction graph and then move
upward.  As the processes $(N_{i,j}(T-t))_{0\leq t\leq T}$ are Poisson
processes, we wait for $\tau'=\inf\{t:\mbox{ jump time of }N_{i,j}(T-.),i\in[q],j\notin[q]\}$.
And then, if $\tau'$ is a jump time for $N_{r',j'}$ with $r'\in[q]$,
we add a branch corresponding to $j'$ to the branch corresponding
to $r'$ (in the same way as in Figure \ref{fig:01}, where we added
the branch with the label $3$ to the branch with the label $2$).
The random times $\tau$ and $\tau'$ have the same law due to Lemma
\ref{Lem:fondamental}, \ref{lem-point-03}. The random couple of
indexes $(r(k),j(k))$, $(r',j')$ have the same law due to Lemma
\ref{Lem:fondamental}, \ref{lem-point-02}.

We now look at the horizontal lines between existing branches (such
as the line between $1$ and $2$ in Figure \ref{fig:01}). Let $0\leq t\leq T$.
We set $j=\#\{k,T_{k}\leq t\}$. We compute
\begin{multline*}
\p(\forall k\leq j,T_{k}\neq T''_{k}|j,T'_{1},T'_{2},\dots,T'_{j},(K_{u})_{0\leq u\leq t})\\
=\p(T'_{1}\frac{\Lambda q(q-1)}{2(N-1)}<V_{1},\dots,(t-T'_{k})\frac{\Lambda(q+j)(q+j-1)}{2(N-1)}<V_{j+1}|j,T'_{1},T'_{2},\dots,T'_{j},(K_{u})_{0\leq u\leq t})\\
=\exp\left(-\int_{0}^{t}\frac{\Lambda K_{u}(K_{u}-1)}{2(N-1)}du\right)\,.
\end{multline*}
 So, conditionally to $(K_{u})_{0\leq u\leq T}$, $(\#\{T_{k}=T''_{k},T_{k}\leq t\})_{t\geq0}$
is an inhomogeneous Poisson process of rate $\left(\frac{\Lambda K_{u}(K_{u}-1)}{2(N-1)}\right)_{0\leq u\leq t}$.
When a jump time of the form $T_{k}=T''_{k}$ occurs, we choose $r(k)$
uniformly in $C_{T_{k}-}^{1}\cup\dots\cup C_{T_{k}-}^{q}$ and $j(k)$
uniformly in $C_{T_{k}-}^{1}\cup\dots\cup C_{T_{k}-}^{q}\backslash\{r(k)\}$.
We then add a horizontal line between branches $r(k)$ and $j(k)$.
Due to Lemma \ref{Lem:fondamental}, \ref{lem-point-02}, this is
the way horizontal branches are added to existing vertical branches
in Definition \ref{Def:system}. $ $

\end{proof}

\subsection{Proof of Lemma\label{sec:Proof-of-Lemma} \ref{Lem:law-K-tilde}}

\begin{proof}The process $(\widetilde{K}_{t})_{0\leq t\leq T}$ is
piecewise constant and has jumps of size $1$. The jump times of $\widetilde{K}_{t}$
belong to $\left\{ T'_{k},k\geq1\right\} $ or to $\left\{ \widetilde{T}'_{k},k\geq1\right\} $.
The jump times of $K_{t}$ belong to $\left\{ T'_{k},k\geq1\right\} $.
Suppose we are at time $s$, and we know $\widetilde{K}_{s}$. %
{} We set $\widetilde{j}=\#\{\widetilde{T}_{k},s<\widetilde{T}_{k}\leq t\}$,
$\widetilde{k}_{0}=\sup\{k,\widetilde{T}_{k}\leq s\}$, $j=\#\{T_{k},s<T_{k}\leq t\}$,
$k_{0}=\sup\{k,T_{k}\leq s\}$. We compute
\begin{multline*}
\E(\1_{\widetilde{K}_{t}=\widetilde{K}_{s}}|\widetilde{K}_{s})=\E(\E(\1_{\widetilde{K}_{t}=\widetilde{K}_{s}}|\widetilde{K}_{s},K_{s},T_{k_{0}},\dots,T_{k_{0}+j},\widetilde{T}_{\widetilde{k}_{0}},\dots,\widetilde{T}_{\widetilde{k}_{0}+\widetilde{j}})|\widetilde{K}_{s})=\\
\E\left[\E\left[\prod_{i=1}^{j}\1_{U_{k_{0}+i}>(T_{k_{0}+i}-T_{k_{0}+i-1})\frac{\Lambda K_{s}(N-K_{s})_{+}}{N-1}}\right.\right.\\
\times\1_{U_{k_{0}+j}>(t-T_{k_{0}+j})\frac{\Lambda K_{s}(N-K_{s})_{+}}{N-1}}\\
\times\prod_{i=1}^{\widetilde{j}}\1_{\widetilde{U}_{\widetilde{k}_{0}+i}>(\widetilde{T}_{\widetilde{k}_{0}+i}-\widetilde{T}_{\widetilde{k}_{0}+i-1})\left(\Lambda\widetilde{K}_{s}-\frac{\Lambda K_{s}(N-K_{s})_{+}}{N-1}\right)}\\
\times\1_{\widetilde{U}_{\widetilde{k}_{0}+\widetilde{j}}>(t-\widetilde{T}_{\widetilde{k}_{0}+\widetilde{j}})\left(\Lambda\widetilde{K}_{s}-\frac{\Lambda K_{s}(N-K_{s})_{+}}{N-1}\right)}\\
\left.\left.|\widetilde{K}_{s},K_{s},T_{k_{0}},\dots,T_{k_{0}+j},\widetilde{T}_{\widetilde{k}_{0}},\dots,\widetilde{T}_{\widetilde{k}_{0}+\widetilde{j}}\right]|\widetilde{K}_{s}\right]=\\
\exp(-(t-s)\Lambda\widetilde{K}_{s})\,.
\end{multline*}

The process $(\widetilde{L}_{t})_{0\leq t\leq T}$ is piecewise constant
and has jumps of size $1$. The jump times of this process belong
to $\{T''_{k},k\geq1\}$, or to $\{\widetilde{T}''_{k},k\geq1\}$.
Suppose we are at time $s$ and we know $\widetilde{L}_{s}$, $(\widetilde{K}_{u})_{0\leq u\leq T}$.
Let $t\geq s$. %
Due to the properties of the exponential law, the probability $\p(\widetilde{L}_{t}=\widetilde{L}_{s}|\widetilde{L}_{s},(\widetilde{K}_{u})_{0\leq u\leq T})$
is equal to %
{} This probability is equal to
\begin{multline*}
\E(\1_{\widetilde{L}_{t}=\widetilde{L}_{s}}|\widetilde{L}_{s},(\widetilde{K}_{u})_{0\leq u\leq T})\\
=\E(\E(\1_{\widetilde{L}_{t}=\widetilde{L}_{s}}|\widetilde{L}_{s},(\widetilde{K}_{u})_{0\leq u\leq T},T_{k_{0}},\dots,T_{k_{0}+j},\widetilde{T}_{\widetilde{k}_{0}},\dots,\widetilde{T}_{\widetilde{k}_{0}+\widetilde{j}})|\widetilde{L}_{s},(\widetilde{K}_{u})_{0\leq u\leq T})\\
=\E\left[\E\left[\prod_{i=1}^{j}\1_{V_{k_{0}+i}>(T_{k_{0}+i}-T_{k_{0}+i-1})\frac{\Lambda K_{_{T_{k_{0}+i}}}(K_{T_{k_{0}+i}}-1)}{N-1}}\right.\right.\\
\times\1_{V_{k_{0}+j}>(t-T_{k_{0}+j})\frac{\Lambda K_{_{T_{k_{0}+j}}}(K_{T_{k_{0}+j}}-1)}{N-1}}\\
\times\prod_{i=1}^{\widetilde{j}}\1_{\widetilde{U}'_{\widetilde{k}_{0}+i}>(\widetilde{T}_{\widetilde{k}_{0}+i}-\widetilde{T}_{\widetilde{k}_{0}+i-1})\left(\frac{\Lambda\widetilde{K}_{_{\widetilde{T}_{k_{0}+i}}}(\widetilde{K}_{_{\widetilde{T}_{k_{0}+i}}}-1)}{N-1}-\frac{\Lambda K_{\widetilde{T}_{k_{0}+i}}(K_{\widetilde{T}_{k_{0}+i}}-1)_{+}}{N-1}\right)}\\
\times\1_{\widetilde{U}'_{\widetilde{k}_{0}+\widetilde{j}}>(t-\widetilde{T}_{\widetilde{k}_{0}+\widetilde{j}})\left(\frac{\Lambda\widetilde{K}_{_{\widetilde{T}_{k_{0}+i}}}(\widetilde{K}_{_{\widetilde{T}_{k_{0}+i}}}-1)}{N-1}-\frac{\Lambda K_{\widetilde{T}_{k_{0}+i}}(K_{\widetilde{T}_{k_{0}+i}}-1)_{+}}{N-1}\right)}\\
\left.\left.|\widetilde{L}_{s},(\widetilde{K}_{u})_{0\leq u\leq T},T_{k_{0}},\dots,T_{k_{0}+j},\widetilde{T}_{\widetilde{k}_{0}},\dots,\widetilde{T}_{\widetilde{k}_{0}+\widetilde{j}}\right]|\widetilde{L}_{s},(\widetilde{K}_{u})_{0\leq u\leq T}\right]=\\
\exp(-(t-s)\Lambda\widetilde{K}_{s})\,.
\end{multline*}
 This probability is equal to $\p\left(\int_{s}^{t}\frac{\Lambda\widetilde{K}_{u}(\widetilde{K}_{u}-1)}{N-1}du\leq V'_{1}|(\widetilde{K}_{u})_{0\leq u\leq T}\right)$
(for some $V'_{1}$ of law $\mathcal{E}(1)$). This proves the point
\ref{loiKtilde:02} of the lemma.

We have for all $\omega,t$, $\Delta K_{t}(\omega)=1\Rightarrow\Delta\widetilde{K}_{t}(\omega)=1$
and $\Delta L_{t}(\omega)=1\Rightarrow\Delta\widetilde{L}_{t}(\omega)=1$,
so we have the point \ref{lemKtilde:03} of the Lemma. The point \ref{lemKtilde:04}
of the Lemma is immediate.

Let $k\geq1$. Suppose we are at time $\widetilde{T}_{k-1}$, with
$\widetilde{T}_{k-1}<T$. The variables $\inf\{T'_{l},T'_{l}\geq\widetilde{T}_{k-1}\}-\widetilde{T}_{k-1}$,
$\inf\{T''_{l},T''_{l}\geq\widetilde{T}_{k-1}\}-\widetilde{T}_{k-1}$,
$\inf\{\widetilde{T}'_{l},\widetilde{T}'_{l}\geq\widetilde{T}_{k-1}\}-\widetilde{T}_{k-1}$,
$\inf\{\widetilde{T}''_{l},\widetilde{T}''_{l}\geq\widetilde{T}_{k-1}\}-\widetilde{T}_{k-1}$
are of exponential law (recall the definition from sections \ref{sub:Backward-point-of},
\ref{sub:Auxiliary-systems}). The infimum of four independent exponential
variables $E_{1},\dots,E_{4}$ of parameters, respectively, $\lambda_{1},\dots,\lambda_{4}$
satisfies $\p(E_{1}=\inf(E_{1},\dots,E_{4})|\inf(E_{1},\dots,E_{4})<t)=\frac{\lambda_{1}}{\lambda_{1}+\dots+\lambda_{4}}$
($\forall t>0$) (see Th. 2.3.3. of \cite{norris-1998}). So,
\begin{eqnarray*}
\p(\widetilde{T}_{k}\in\{\widetilde{T}'_{l},l\geq1\}|\mathcal{K}_{\widetilde{T}_{k-1}},\widetilde{\mathcal{K}}_{\widetilde{T}_{k-1}},\widetilde{T}_{k}<T) & = & \frac{\Lambda\widetilde{K}_{\widetilde{T}_{k-1}}-\frac{\Lambda K_{\widetilde{T}_{k-1}}(N-K_{\widetilde{T}_{k-1}})_{+}}{N-1}}{\Lambda\widetilde{K}_{\widetilde{T}_{k-1}}+\frac{\Lambda\widetilde{K}_{\widetilde{T}_{k-1}}(\widetilde{K}_{\widetilde{T}_{k-1}}-1)}{(N-1)}}\,,
\end{eqnarray*}
 
\[
\p(\widetilde{T}_{k}\in\{T_{l}',l\geq1\}|\mathcal{K}_{\widetilde{T}_{k-1}},\widetilde{\mathcal{K}}_{\widetilde{T}_{k-1}},\widetilde{T}_{k}<T)=\frac{\frac{\Lambda K_{\widetilde{T}_{k-1}}(N-K_{\widetilde{T}_{k-1}})_{+}}{N-1}}{\Lambda\widetilde{K}_{\widetilde{T}_{k-1}}+\frac{\Lambda\widetilde{K}_{\widetilde{T}_{k-1}}(\widetilde{K}_{\widetilde{T}_{k-1}}-1)}{(N-1)}}\,,
\]
 so, recalling (\ref{eq:tirage-r-j-01}), (\ref{eq:proba-choix-01}),
(\ref{eq:proba-choix-02}), 
\begin{multline}
\p(\Delta\widetilde{K}_{\widetilde{T}_{k}}^{i}=1|\widetilde{T}_{k}\in\{T'_{l},\widetilde{T}'_{l},l\geq1\},\mathcal{K}_{\widetilde{T}_{k-1}},\widetilde{\mathcal{K}}_{\widetilde{T}_{k-1}},\widetilde{T}_{k}<T)=\\
\left(\frac{\widetilde{K}_{\widetilde{T}_{k-1}}-\frac{K_{\widetilde{T}_{k-1}}(N-K_{\widetilde{T}_{k-1}})_{+}}{N-1}}{\widetilde{K}_{\widetilde{T}_{k-1}}}\right)\times\left[\frac{K_{\widetilde{T}_{k-1}}-\frac{K_{\widetilde{T}_{k-1}}(N-K_{\widetilde{T}_{k-1}})_{+}}{N-1}}{\widetilde{K}_{\widetilde{T}_{k-1}}-\frac{K_{\widetilde{T}_{k-1}}(N-K_{\widetilde{T}_{k-1}})_{+}}{N-1}}\times\frac{K_{\widetilde{T}_{k-1}}^{i}}{K_{\widetilde{T}_{k-1}}}\right.\\
\left.+\frac{\widetilde{K}_{\widetilde{T}_{k-1}}-K_{\widetilde{T}_{k-1}}}{\widetilde{K}_{\widetilde{T}_{k-1}}-\frac{K_{\widetilde{T}_{k-1}}(N-K_{\widetilde{T}_{k-1}})_{+}}{N-1}}\times\frac{\widetilde{K}_{\widetilde{T}_{k-1}}^{i}-K_{\widetilde{T}_{k-1}}^{i}}{\widetilde{K}_{\widetilde{T}_{k-1}}-K_{\widetilde{T}_{k-1}}}\right]+\frac{\left(\frac{K_{\widetilde{T}_{k-1}}(N-K_{\widetilde{T}_{k-1}})_{+}}{N-1}\right)}{\widetilde{K}_{\widetilde{T}_{k-1}}}\times\frac{K_{\widetilde{T}_{k-1}}^{i}}{K_{\widetilde{T}_{k-1}}}=\frac{\widetilde{K}_{\widetilde{T}_{k-1}}^{i}}{\widetilde{K}_{\widetilde{T}_{k-1}}}\,,\label{eq:long-calcul-01}
\end{multline}
 and this last expression depends only on $\widetilde{\mathcal{K}}_{\widetilde{T}_{k}-}$.
By point \ref{loiKtilde:02} of the lemma, the process $(\widetilde{K}_{s})_{s\geq0}$
is equal in law to the sum of $q$ independent Yule processes $Y_{s}^{(1)}$,
\dots, $Y_{s}^{(q)}$, and its law is thus independent of $N$ (see
\cite{athreya-ney-2004}, p. 102-109, p. 109 for the law of the Yule
process). We have, for all $s$, 
\begin{equation}
\p(Y_{s}^{(1)}=k)=e^{-s\Lambda}(1-e^{-s\Lambda})^{k-1}\label{eq:law-Yule}
\end{equation}
 and so (see for example \cite{dodge-2004}, p. 288), 
\begin{equation}
\p(\widetilde{K}_{t}=k)=\p(Y_{t}^{(1)}+\dots+Y_{t}^{(q)}=k)=\left(\begin{array}{c}
k\\
q-1
\end{array}\right)(e^{-\Lambda t})^{q}(1-e^{-\Lambda t})^{k-q}\,.\label{Eq:majK}
\end{equation}
  Using the point \ref{loiKtilde:02} of the lemma, Equation (\ref{eq:long-calcul-01})
and the point \ref{lem-point-02} of Lemma \ref{Lem:fondamental},
we obtain the point \ref{lemKtilde:01} of the Lemma.

Reasoning as above, we can show that, conditionally to $\mathcal{K}_{T}$,
$\widetilde{\mathcal{K}}_{T}$, the process $(\#\{T_{k},T_{k}=T''_{k},T_{k}\leq t\})_{0\leq t\leq T}$
is a homogeneous Poisson process of rate $\left(\frac{\Lambda K_{t}(K_{t}-1)}{2(N-1)}\right)_{0\leq t\leq T}$
and the process $(\#\{\widetilde{T}_{k},\widetilde{T}_{k}=\widetilde{T}''_{k},\widetilde{T}_{k}\leq t\})_{0\leq t\leq T}$
is a homogeneous Poisson process of rate 
\[
\left(\frac{\Lambda\widetilde{K}_{t}(\widetilde{K}_{t}-1)-\Lambda K_{t}(K_{t}-1)}{2(N-1)}\right)_{0\leq t\leq T}\,.
\]
So, by Lemma \ref{Lem:fondamental}, \ref{lem-point-03}, 
\[
\p(t\in\{T''_{l},l\geq1\}|\mathcal{K}_{T},\widetilde{\mathcal{K}}_{T},\Delta\widetilde{L}_{t}=1)=\frac{\Lambda K_{t}(K_{t}-1)}{\Lambda\widetilde{K}_{t}(\widetilde{K}_{t}-1)}\,.
\]
 We have, for all $i$ (recalling (\ref{eq:tirage-r-j-02})), 
\[
\p(\Delta\widetilde{L}_{t}^{\{i,i+1\}}=1|\mathcal{K}_{T},\widetilde{\mathcal{K}}_{T},t\in\{T''_{l},l\geq1\})=\frac{2K_{t}^{i}K_{t}^{i+1}}{K_{t}(K_{t}-1)}\,,
\]
 and (recalling (\ref{eq:proba-saut-en-plus}), (\ref{eq:proba-saut-en-plus-02}))
\begin{multline*}
\p(\Delta\widetilde{L}_{t}^{\{i,i+1\}}=1|\mathcal{K}_{T},\widetilde{\mathcal{K}}_{T},t\in\{\widetilde{T}''_{l},l\geq1\})=\\
\frac{(\widetilde{K}_{t}-K_{t})K_{t}+(\widetilde{K}_{t}-K_{t})(\widetilde{K}_{t}-K_{t}-1)}{\ktt(\ktt-1)-\kt(\kt-1)}\times\left(\frac{(\ktt^{i}-\kt^{i})\ktt^{i+1}}{(\ktt-\kt)(\ktt-1)}+\frac{(\ktt^{i+1}-\kt^{i+1})\ktt^{i}}{(\ktt-\kt)(\ktt-1)}\right)\\
+\frac{(\ktt-\kt)K_{t}}{\ktt(\ktt-1)-\kt(\kt-1)}\times\left(\frac{(\ktt^{i}-\kt^{i})K_{t}^{i+1}}{(\ktt-\kt)\kt}+\frac{(\ktt^{i+1}-\kt^{i+1})\kt^{i}}{(\ktt-\kt)\kt}\right)=\\
\frac{(\ktt^{i}-\kt^{i})(\ktt^{i+1}+\kt^{i+1})+(\ktt^{i+1}-\kt^{i+1})(\ktt^{i}+\kt^{i})}{\ktt(\ktt-1)-\kt(\kt-1)}\,.
\end{multline*}
 So,
\begin{multline*}
\p(\Delta\widetilde{L}_{t}^{\{i,i+1\}}=1|\mathcal{K}_{T},\widetilde{\mathcal{K}}_{T},\Delta\widetilde{L}_{t}=1)=\\
\left(\frac{\ktt(\ktt-1)-\kt(\kt-1)}{\ktt(\ktt-1)}\right)\times\left(\frac{(\ktt^{i}-\kt^{i})(\ktt^{i+1}+\kt^{i+1})+(\ktt^{i+1}-\kt^{i+1})(\ktt^{i}+\kt^{i})}{\ktt(\ktt-1)-\kt(\kt-1)}\right)\\
+\left(\frac{\kt(\kt-1)}{\ktt(\ktt-1)}\right)\left(\frac{2\kt^{i}\kt^{i+1}}{\kt(\kt-1)}\right)=\frac{2\ktt^{i}\ktt^{i+1}}{\ktt(\ktt-1)}\,,
\end{multline*}
 so, 
\[
\p(\Delta\widetilde{L}_{t}^{\{i,i+1\}}=1|\widetilde{\mathcal{K}}_{T},\Delta\widetilde{L}_{t}=1)=\frac{2\ktt^{i}\ktt^{i+1}}{\ktt(\ktt-1)}\,.
\]
 So, using Lemma \ref{Lem:fondamental}, \ref{lem-point-02} we have
the point \ref{lemKtilde:05} of the lemma.

\end{proof}

The author would  like to thank the following colleagues for their
input: Christophe Giraud, Nicolas Champagnat, Benjamin Jourdain, Tony
Lelièvre, Patricia Reynaud-Bouret. 

\bibliographystyle{amsalpha.bst} \bibliographystyle{amsalpha}
\bibliography{bib-dl-boltzmann}

\end{document}